\newtheorem{theorem}{Theorem}
\newtheorem{lemma}[theorem]{Lemma}
\newtheorem{proposition}[theorem]{Proposition}
\newtheorem{definition}[theorem]{Definition}
\newenvironment{Proof}[1][Proof.]{\begin{trivlist}
		\item[\hskip \labelsep {\bfseries #1}]}{\flushright
		$\Box$\end{trivlist}}
\newcommand{\aut}[1]{\operatorname{\mathrm{Aut}}{#1}}
\newcommand{\ann}[1]{\operatorname{\mathrm{Ann}}{#1}}
\begin{document}
	
	\noindent{\large
		Central extensions of filiform Zinbiel algebras}
	\footnote{
	    This work was supported by Agencia Estatal de Investigación (Spain), grant MTM2016-79661-P (European FEDER support included, UE);   RFBR 20-01-00030; FAPESP  18/15712-0,  18/12197-7, 	19/00192-3.
	}

	\ 
	
	{\bf  
		Luisa M. Camacho$^{a}$,
		Iqboljon Karimjanov$^{b},$ 
		Ivan   Kaygorodov$^{c}$ \&
		Abror Khudoyberdiyev$^{d}$}

	\

	\

	{\tiny
		
		$^{a}$ University of Sevilla, Sevilla, Spain.
		
		$^{b}$ Andijan State University, Andijan, Uzbekistan.
		
		$^{c}$ CMCC, Universidade Federal do ABC, Santo Andr\'e, Brazil.
		
		$^{d}$ National University of Uzbekistan, Institute of Mathematics Academy of
		Sciences of Uzbekistan, Tashkent, Uzbekistan.
		
	\ 
	
		\smallskip
		
		E-mail addresses:

		\smallskip
		
		Luisa M. Camacho (lcamacho@us.es)
		
		Iqboljon Karimjanov (iqboli@gmail.com)
		
		Ivan   Kaygorodov (kaygorodov.ivan@gmail.com)
		
		Abror Khudoyberdiyev (khabror@mail.ru)

	}

	\ 
	
	\
	
	\noindent{\bf Abstract}:
	{\it  In this paper we describe central extensions (up to isomorphism) of all complex null-filiform and filiform Zinbiel algebras. It is proven that every non-split central extension of an $n$-dimensional null-filiform Zinbiel algebra is isomorphic to an $(n+1)$-dimensional null-filiform Zinbiel algebra. Moreover, we obtain all pairwise non isomorphic quasi-filiform Zinbiel algebras.}
	
	\

	\noindent {\bf Keywords}: {\it Zinbiel algebra,  filiform algebra, algebraic classification, central extension.}
	
	\ 
	
	\noindent {\bf MSC2010}: 17D25, 17A30.

	\ 
	
	\ 
	
	\section*{Introduction}

	The algebraic classification (up to isomorphism) of an $n$-dimensional algebras from a certain variety
	defined by some family of polynomial identities is a classical problem in the theory of non-associative algebras.
	There are many results related to algebraic classification of small dimensional algebras in the varieties of 
	Jordan, Lie, Leibniz, Zinbiel and many another algebras \cite{ack,jkk19,kkl20,gkks, degr3, lisa, cfk19, usefi1, degr1, degr2,kl20, kv16, ckkk19,kks19, ha16, hac18}.
	An algebra $\bf A$ is called a {\it Zinbiel algebra} if it satisfies the identity 
	\[(x\circ y)\circ z=x\circ (y\circ z+z\circ y).\]
	Zinbiel algebras were introduced by Loday in \cite{loday} and studied in \cite{abash,adashev19, cam13,mukh, dok,  anau, dzhuma, dzhuma5, dzhuma19, kppv, ualbay, yau}. Under the Koszul duality, the operad of Zinbiel algebras is dual to the operad of Leibniz algebras. 
	Hence,  the tensor product of a Leibniz algebra and a Zinbiel algebra can be given the structure of a Lie algebra. Under the symmetrized product, a Zinbiel algebra becomes an associative and commutative algebra. Zinbiel algebras are also related to Tortkara algebras \cite{dzhuma} and Tortkara triple systems \cite{brem}. 
	More precisely, every Zinbiel algebra with the commutator multiplication gives a Tortkara algebra (also about Tortkara algebras, see, \cite{gkp,gkk,gkks}),
	which have recently sprung up in unexpected areas of mathematics \cite{tortnew1,tortnew2}.
	
	Central extensions play an important role in quantum mechanics: one of the earlier
	encounters is by means of Wigner's theorem which states that a symmetry of a quantum
	mechanical system determines an (anti-)unitary transformation of a Hilbert space.
	Another area of physics where one encounters central extensions is the quantum theory
	of conserved currents of a Lagrangian. These currents span an algebra which is closely
	related to so-called affine Kac-Moody algebras, which are universal central extensions
	of loop algebras.
	Central extensions are needed in physics, because the symmetry group of a quantized
	system usually is a central extension of the classical symmetry group, and in the same way
	the corresponding symmetry Lie algebra of the quantum system is, in general, a central
	extension of the classical symmetry algebra. Kac-Moody algebras have been conjectured
	to be symmetry groups of a unified superstring theory. The centrally extended Lie
	algebras play a dominant role in quantum field theory, particularly in conformal field
	theory, string theory and in $M$-theory.
	In the theory of Lie groups, Lie algebras and their representations, a Lie algebra extension
	is an enlargement of a given Lie algebra $g$ by another Lie algebra $h.$ Extensions
	arise in several ways. There is a trivial extension obtained by taking a direct sum of
	two Lie algebras. Other types are a split extension and a central extension. Extensions may
	arise naturally, for instance, when forming a Lie algebra from projective group representations.
	A central extension and an extension by a derivation of a polynomial loop algebra
	over a finite-dimensional simple Lie algebra gives a Lie algebra which is isomorphic to a
	non-twisted affine Kac-Moody algebra \cite[Chapter 19]{bkk}. Using the centrally extended loop
	algebra one may construct a current algebra in two spacetime dimensions. The Virasoro
	algebra is the universal central extension of the Witt algebra, the Heisenberg algebra is
	the central extension of a commutative Lie algebra  \cite[Chapter 18]{bkk}.
	
	The algebraic study of central extensions of Lie and non-Lie algebras has a very long history \cite{omirov,ha17,ha16nm,hac16,kkl18,is11,ss78,zusmanovich,klp20}.
	For example, all central extensions of some filiform Leibniz algebras were classified in \cite{is11,omirov}
	and all central extensions of filiform associative algebras were classified in \cite{kkl18}.
	Skjelbred and Sund used central extensions of Lie algebras for a classification of low dimensional nilpotent Lie algebras  \cite{ss78}.
	After that, the method introduced by Skjelbred and Sund was used to 
	describe  all non-Lie central extensions of  all $4$-dimensional Malcev algebras \cite{hac16},
	all non-associative central extensions of $3$-dimensional Jordan algebras \cite{ha17},
	all anticommutative central extensions of $3$-dimensional anticommutative algebras \cite{cfk182}.
	Note that the method of central extensions is an important tool in the classification of nilpotent algebras.
	It was used to describe
	all $4$-dimensional nilpotent associative algebras \cite{degr1},
	all $4$-dimensional nilpotent assosymmetric  algebras \cite{ikm19},
	all $4$-dimensional nilpotent bicommutative algebras \cite{kpv19},
	all $4$-dimensional nilpotent Novikov  algebras \cite{kkk18},
	all $4$-dimensional commutative algebras \cite{fkkv19},
	all $5$-dimensional nilpotent Jordan algebras \cite{ha16},
	all $5$-dimensional nilpotent restricted Lie algebras \cite{usefi1},
	all $5$-dimensional anticommutative algebras \cite{fkkv19},
	all $6$-dimensional nilpotent Lie algebras \cite{degr3,degr2},
	all $6$-dimensional nilpotent Malcev algebras \cite{hac18},
	all $6$-dimensional nilpotent binary Lie algebras\cite{ack},  
	all $6$-dimensional nilpotent anticommutative $\mathfrak{CD}$-algebras \cite{ack},
	all $6$-dimensional nilpotent Tortkara algebras\cite{gkks, gkk},  
	and some others.

	\section{Preliminaries}
	
	All algebras and vector spaces in this paper are over $\mathbb{C}.$
	
	\subsection{Filiform Zinbiel algebras}
	
	An algebra $\bf {A}$ is called \textit{Zinbiel algebra} if for any $x,y,z\in \bf{A}$ it satisfies the identity 
	$$(x\circ y)\circ z=x\circ (y\circ z)+x\circ (z\circ y).$$

	For an algebra $\bf {A}$, we consider the series
	\[
	{\bf A}^1={\bf A}, \qquad \ {\bf A}^{i+1}=\sum\limits_{k=1}^{i}{\bf A}^k {\bf A}^{i+1-k}, \qquad i\geq 1.
	\]
	
	We say that  an  algebra $\bf A$ is {\it nilpotent} if ${\bf  A}^{i}=0$ for some $i \in \mathbb{N}$. The smallest integer satisfying ${\bf A}^{i}=0$ is called the  {\it nilpotency index} of $\bf A$.
	
	\begin{definition}
		An $n$-dimensional algebra $\bf A$ is called null-filiform if $\dim {\bf A}^i=(n+ 1)-i,\ 1\leq i\leq n+1$.
	\end{definition}
	
	It is easy to see that a Zinbiel algebra has a maximal nilpotency index if and only if it is null-filiform. 
	For a nilpotent Zinbiel  algebra, the condition of null-filiformity is equivalent to the condition that the algebra is one-generated.
	
	All null-filiform Zinbiel algebras were described in  \cite{adashev}. Throughout the paper, $C_i^j$ denotes the combinatorial numbers $\binom{i}{j}.$
	
	\begin{theorem} \cite{adashev} \label{nullfil}  An arbitrary $n$-dimensional null-filiform Zinbiel algebra is isomorphic to the algebra $F_n^0$:
		\[  e_i\circ e_j=C_{i+j-1}^{j}  e_{i+j}, \quad 2\leq i+j\leq n,\]
		where omitted products are equal to zero and $\{ e_1, e_2, \dots, e_n\}$ is a basis of the algebra.

	\end{theorem}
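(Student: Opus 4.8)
The plan is to use the fact, recorded just above, that a null-filiform Zinbiel algebra $\mathbf{A}$ is one-generated, and to reconstruct its whole multiplication table from a single generator. First I would fix $e_1\in\mathbf{A}\setminus\mathbf{A}^2$; since $\dim\mathbf{A}/\mathbf{A}^2=1$ such an element generates $\mathbf{A}$. Set $a_1=e_1$ and define the right-normed powers recursively by $a_{i+1}=e_1\circ a_i$ (so $a_2=e_1\circ e_1$, and so on). The core of the argument is the identity, valid for these powers in an arbitrary Zinbiel algebra,
\[ a_i\circ a_j=C_{i+j-1}^{j}\,a_{i+j},\qquad i,j\geq 1. \]

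I would prove this by induction on the total length $m=i+j$. The base $m=2$ is the definition $a_1\circ a_1=a_2$ together with $C_1^1=1$. For $m\geq 3$: if $i=1$ it is again the defining relation $a_1\circ a_j=a_{j+1}$ with $C_j^j=1$; if $i\geq 2$, write $a_i=e_1\circ a_{i-1}$ and apply the Zinbiel identity,
\[ a_i\circ a_j=(e_1\circ a_{i-1})\circ a_j=e_1\circ(a_{i-1}\circ a_j)+e_1\circ(a_j\circ a_{i-1}). \]
Both inner products have length $m-1$, so by the inductive hypothesis $a_{i-1}\circ a_j=C_{i+j-2}^{j}a_{i+j-1}$ and $a_j\circ a_{i-1}=C_{i+j-2}^{i-1}a_{i+j-1}=C_{i+j-2}^{j-1}a_{i+j-1}$; applying $e_1\circ(-)$ to both and using Pascal's rule $C_{i+j-2}^{j}+C_{i+j-2}^{j-1}=C_{i+j-1}^{j}$ completes the step. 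When $i+j$ exceeds $\dim\mathbf{A}$ the vector $a_{i+j}$ is zero and the identity is trivially true, so the combinatorics are unaffected.

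Next I would show $\{a_1,\dots,a_n\}$ is a basis of $\mathbf{A}$. From the displayed identity every product of copies of $e_1$ is a scalar multiple of some $a_k$, hence $\mathbf{A}^k=\langle a_j:j\geq k\rangle$: one inclusion because $a_j=e_1\circ a_{j-1}\in\mathbf{A}^j$, the other by induction on $k$ from $\mathbf{A}^k=\sum_{i=1}^{k-1}\mathbf{A}^i\mathbf{A}^{k-i}$ together with $a_p\circ a_q\in\langle a_{p+q}\rangle$ and $p+q\geq k$. Null-filiformity gives $\dim(\mathbf{A}^k/\mathbf{A}^{k+1})=1$ for $1\leq k\leq n$, which forces $a_k$ to be nonzero there; a minimal-degree argument applied to any linear dependence then shows $a_1,\dots,a_n$ are linearly independent, and they span, so they form a basis. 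Renaming $a_i$ as $e_i$, the displayed identity is exactly the multiplication table of $F_n^0$, proving the theorem.

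The hard part is really just the inductive identity for $a_i\circ a_j$: the key choice is to induct on the total length $i+j$ rather than on $i$ alone, so that the reversed-order term $a_j\circ a_{i-1}$ produced by the Zinbiel identity also falls under the hypothesis, after which the binomial bookkeeping collapses by Pascal's rule. Everything else — one-generatedness, the identification of the lower central series, and the linear independence — is routine once that identity is available.
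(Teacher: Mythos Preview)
Your argument is correct. Note, however, that the paper itself does not give a proof of this theorem: it is quoted from \cite{adashev} as a known classification result, so there is no ``paper's own proof'' to compare against. Your approach---taking a generator $e_1\in\mathbf{A}\setminus\mathbf{A}^2$, forming the right-normed powers $a_{i+1}=e_1\circ a_i$, proving $a_i\circ a_j=C_{i+j-1}^{j}a_{i+j}$ by induction on $i+j$ using the Zinbiel identity and Pascal's rule, and then extracting the basis from the dimension conditions $\dim\mathbf{A}^k=n+1-k$---is exactly the standard argument for this result (and is essentially what is done in the cited reference). The choice to induct on the total length $i+j$ is indeed the key point, and the rest is routine as you say.
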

	
	As an easy corollary from the previous theorem we have the next result.
	
	\begin{theorem} 
		Every non-split central extension of $F_n^0$ is isomorphic to $F_{n+1}^0.$
	\end{theorem}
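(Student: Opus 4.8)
The plan is to show that an arbitrary one-dimensional non-split central extension $A$ of $F_n^0$ is one-generated; since $\dim A=n+1$ and, by the remark preceding Theorem~\ref{nullfil}, a nilpotent Zinbiel algebra is null-filiform exactly when it is one-generated, Theorem~\ref{nullfil} will then give $A\cong F_{n+1}^0$. Fix a one-dimensional central ideal $I=\langle c\rangle$ of $A$ with $A/I\cong F_n^0$, and let $\pi\colon A\to A/I$ be the quotient map. First one checks that $A$ is nilpotent: some power $A^k$ lies in $I$ (because $A/I\cong F_n^0$ is nilpotent), and since $I$ is central we get $A^{2k}=0$, as every term $A^jA^{2k-j}$ has $\max(j,2k-j)\ge k$ and hence lies in $I\cdot A=A\cdot I=0$.

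Next I would pin down $\dim A^2$. From $(A/I)^2=(A^2+I)/I$ and $\dim (F_n^0)^2=n-1$ one gets $\dim(A^2+I)=n$, so $n-1\le\dim A^2\le n$. If $\dim A^2=n$, then $A^2=A^2+I\supseteq I$, hence $A/A^2$ is one-dimensional and $A$ is one-generated, which is what we want. Thus the whole statement reduces to ruling out the case $\dim A^2=n-1$; in that case $c\notin A^2$, $A^2\cap I=0$, and $\dim A/A^2=2$.

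So assume $\dim A^2=n-1$. Since $F_n^0$ is one-generated, choose a generator of it and lift it to an element $e_1\in A$, and let $B=\langle e_1\rangle$ be the subalgebra of $A$ generated by $e_1$. As $\pi(e_1)$ generates $A/I\cong F_n^0$, the subalgebra $\pi(B)$ contains a generator of $A/I$, hence $\pi(B)=A/I$; therefore $A=B+I$ and the restriction $\pi|_B\colon B\to F_n^0$ is surjective, so $\dim B\ge n$. On the other hand $B=\mathbb{C}e_1+B^2$, so $\dim B/B^2\le 1$. If $B=A$ this would force $\dim A/A^2\le1$, contradicting $\dim A/A^2=2$; hence $\dim B=n$. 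Since $\dim B+\dim I=n+1=\dim A$ and $A=B+I$, we get $B\cap I=0$, so $A=B\oplus I$ — and as $I$ is a central ideal and $B$ a subalgebra, this is a decomposition into a direct sum of algebras, i.e. the extension is split. This contradicts the hypothesis, so $\dim A^2=n$ after all and the proof is complete.

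The only real content lies in the last paragraph, and its engine is the one-generation of $F_n^0$: lifting a single generator already reproduces a full copy of $F_n^0$ inside $A$, and the elementary inequality $\dim B/B^2\le1$ is exactly what prevents this copy from exhausting all of $A$ in the would-be split case. This is also why the result is genuinely a corollary of Theorem~\ref{nullfil}: it uses nothing about the explicit structure constants of $F_n^0$ beyond the fact that it is generated by one element.
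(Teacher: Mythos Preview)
Your proof is correct and follows exactly the paper's strategy: show that a non-split central extension is one-generated and nilpotent, hence null-filiform, and then invoke Theorem~\ref{nullfil}. The paper merely asserts the one-generation step as ``easy to see''; you have supplied the argument for it via the dichotomy on $\dim A^2$ and the lifting of a generator.
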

	
	\begin{Proof}
		It is easy to see, that every non-split central extension of $F_n^0$ is a one-generated nilpotent algebra.
		It follows that every non-split central extension of a null-filiform Zinbiel algebra is a null-filiform Zinbiel algebra.
		Using the classification of null-filiform algebras (Theorem \ref{nullfil}) we have the statement of the Theorem.
	\end{Proof}

	\begin{definition}
		An $n$-dimensional algebra is called filiform if $\dim (\mathbf{A} ^i)=n-i, \ 2\leq i \leq n$.
	\end{definition}

	All filiform Zinbiel algebras were classified in  \cite{adashev}.
	
\begin{theorem}\label{th-filiform}
		An arbitary $n$-dimensional ($n\geq 5$)  filiform Zinbiel algebra is isomorphic to one of the following pairwise non-isomorphic algebras:
\begin{longtable}{lllll}
		$F_n^1$ &$:$ & $e_i\circ e_j=C_{i+j-1}^{j} e_{i+j},$  & $2\leq i+j\leq n-1;$ &\\
		$F_n^2$ &$:$ & $e_i\circ e_j=C_{i+j-1}^{j} e_{i+j},$  &$2\leq i+j\leq n-1,$& $e_n\circ e_1=e_{n-1};$ \\
		$F_n^3$&$:$ & $e_i \circ e_j=C_{i+j-1}^{j} e_{i+j},$  &$2\leq i+j\leq n-1,$& $e_n\circ e_n=e_{n-1}.$ 
		\end{longtable}
	\end{theorem}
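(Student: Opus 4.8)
The plan is to find, in an arbitrary $n$-dimensional filiform Zinbiel algebra $\mathbf{A}$ with $n\ge 5$, a basis $\{e_1,\dots,e_n\}$ in which $e_1,\dots,e_{n-1}$ span a one-generated — hence, by the remark following Theorem~\ref{nullfil}, null-filiform — subalgebra, which by Theorem~\ref{nullfil} must be $F_{n-1}^0$; then to determine the few structure constants involving $e_n$ by means of the Zinbiel identity; and finally to normalise them by a change of basis.

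\emph{Step 1 (the null-filiform block).} The conditions $\dim\mathbf{A}^i=n-i$ give that $\mathbf{A}$ is $2$-generated, nilpotent of index exactly $n$, and $\dim(\mathbf{A}^i/\mathbf{A}^{i+1})=1$ for $2\le i\le n-1$. First one produces a generator $e_1$ with $e_1\circ e_1\notin\mathbf{A}^3$: otherwise $x\circ x\in\mathbf{A}^3$ for all $x$, so (characteristic $0$) $x\circ y+y\circ x\in\mathbf{A}^3$ for all $x,y$, and expanding $(x\circ y)\circ z=x\circ(y\circ z+z\circ y)$ and iterating forces the products to climb in the filtration until $\mathbf{A}^4=0$, contradicting filiformity for $n\ge 5$. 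Put $e_2:=e_1\circ e_1$ and $e_{k+1}:=e_k\circ e_1$; a descending argument of the same kind (if $e_k\circ e_1$ dropped into $\mathbf{A}^{k+2}$ one would rewrite the generators of $\mathbf{A}^{k+1}/\mathbf{A}^{k+2}$ via the Zinbiel identity and obtain $\mathbf{A}^{k+1}=\mathbf{A}^{k+2}=0$) shows $e_k\notin\mathbf{A}^{k+1}$ for $k\le n-1$. Hence $e_1,\dots,e_{n-1}$ are independent and generate an $(n-1)$-dimensional one-generated nilpotent, hence null-filiform, subalgebra; by Theorem~\ref{nullfil} it is $F_{n-1}^0$, so after rescaling $e_i\circ e_j=C_{i+j-1}^{\,j}e_{i+j}$ for $2\le i+j\le n-1$, these products being $0$ once $i+j\ge n$ since they then lie in $\mathbf{A}^n=0$. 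Pick $e_n\notin\mathbf{A}^2+\mathbb{C}e_1$ to complete the basis.

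\emph{Step 2 (products with $e_n$).} Here $e_1\circ e_n,\ e_n\circ e_1,\ e_n\circ e_n\in\mathbf{A}^2=\langle e_2,\dots,e_{n-1}\rangle$, while $e_j\circ e_n,\ e_n\circ e_j\in\mathbf{A}^{j+1}$ for $j\ge 2$. Writing $e_1\circ e_n+e_n\circ e_1=\sum_{k}\gamma_k e_k$, the identities $(e_{k-1}\circ e_1)\circ e_n=e_{k-1}\circ(e_1\circ e_n+e_n\circ e_1)$ and $e_n\circ(e_{k-1}\circ e_1)=(e_n\circ e_{k-1})\circ e_1-e_n\circ(e_1\circ e_{k-1})$ give all $e_j\circ e_n$ and $e_n\circ e_j$ ($j\ge 2$) in terms of $e_n\circ e_1$ and the $\gamma_k$; comparing the two evaluations of $(e_1\circ e_n)\circ e_1$ then pins down $e_n\circ e_1$ (apart from its $e_{n-1}$-component) in terms of the $\gamma_k$, and the analogous computation for $(e_n\circ e_n)\circ e_1$ pins down $e_n\circ e_n$ apart from its $e_{n-1}$-component. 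A triangular substitution $e_n\mapsto e_n+\sum_k\beta_k e_k$ removes the $\gamma_k$ with $k\le n-2$; afterwards the only possibly nonzero products involving $e_n$ are $e_1\circ e_n,\ e_n\circ e_1,\ e_n\circ e_n$, each a scalar multiple of $e_{n-1}$, and $\mathbf{A}$ is encoded by a triple of scalars $(\gamma_{n-1},\delta_{n-1},\varepsilon_{n-1})$.

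\emph{Step 3 (normalisation and distinctness).} Every such triple is readily seen to satisfy the Zinbiel identity, so what is left is an orbit count. The admissible changes of basis — rescaling $e_n$, the shift $e_n\mapsto e_n+ce_{n-2}$, a simultaneous rescaling of the whole $F_{n-1}^0$-block, and, crucially, replacing the first generator by $e_1+\nu e_n$ and rebuilding the $F_{n-1}^0$-block accordingly — act on $(\gamma_{n-1},\delta_{n-1},\varepsilon_{n-1})$ by independent rescalings of $(\gamma_{n-1},\delta_{n-1})$ and of $\varepsilon_{n-1}$, together with translations of $(\gamma_{n-1},\delta_{n-1})$ along $(n-1,n-2)$ and — only when $\varepsilon_{n-1}\ne 0$ — also along $(2,1)$. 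Since for $n\ne 3$ the vectors $(n-1,n-2)$ and $(2,1)$ are independent, one obtains exactly three orbits: $\varepsilon_{n-1}=0$ with $(n-2)\gamma_{n-1}=(n-1)\delta_{n-1}$ (whence $\mathbf{A}\cong F_n^1$); $\varepsilon_{n-1}=0$ with $(n-2)\gamma_{n-1}\ne(n-1)\delta_{n-1}$ (whence $\mathbf{A}\cong F_n^2$); and $\varepsilon_{n-1}\ne 0$ (whence $\mathbf{A}\cong F_n^3$). Finally $F_n^1$, $F_n^2$, $F_n^3$ are pairwise non-isomorphic, being separated by the dimensions of the left annihilator and of the centre (and by decomposability, $F_n^1\cong F_{n-1}^0\oplus\mathbb{C}$). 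The delicate part is Step~1 — extracting one generator whose iterated right products stay in general position right down the filtration; Steps~2 and~3 are lengthy but routine, being systematic use of the Zinbiel identity followed by a linear orbit computation. One can alternatively argue by induction on $n$: $\mathbf{A}^{n-1}$ is a $1$-dimensional central ideal and $\mathbf{A}/\mathbf{A}^{n-1}$ is again filiform, so $\mathbf{A}$ is a $1$-dimensional central extension of some $F_{n-1}^i$, which reduces matters to the base case $n=5$ together with a central-extension computation for $F_{n-1}^1, F_{n-1}^2, F_{n-1}^3$.
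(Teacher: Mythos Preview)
The paper itself does not prove this theorem: it is quoted from \cite{adashev} and taken as input for the rest of the paper. So there is no ``paper's own proof'' to compare to; I can only comment on the soundness of your outline.

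Your overall strategy is reasonable, and the alternative inductive approach you sketch at the very end (central extension of $F_{n-1}^i$) is in fact exactly the machinery the paper develops in Section~\ref{S:fil}. However, the direct argument in Steps~1--3 contains concrete errors.

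\emph{Step~3 is miscalculated.} The shift $e_n\mapsto e_n+ce_{n-2}$ sends $(\gamma,\delta)\mapsto(\gamma+c,\;\delta+c(n-2))$, since $e_1\circ e_{n-2}=e_{n-1}$ while $e_{n-2}\circ e_1=(n-2)e_{n-1}$; this is translation along $(1,n-2)$, not $(n-1,n-2)$. Likewise, rebuilding after $e_1\mapsto e_1+\nu e_n$ gives $(\gamma,\delta)\mapsto(\gamma+\nu\varepsilon,\;\delta+\nu\varepsilon)$, i.e.\ translation along $(1,1)$, not $(2,1)$. Consequently your orbit condition for $F_n^1$ should read $\delta=(n-2)\gamma$, not $(n-2)\gamma=(n-1)\delta$: for instance the algebra with $(\gamma,\delta,\varepsilon)=(1,n-2,0)$ is isomorphic to $F_n^1$ (via $e_n\mapsto e_n-e_{n-2}$), yet your condition excludes it. The final count of three orbits happens to survive, but only by luck.

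\emph{The distinctness argument is insufficient.} Both $F_n^2$ and $F_n^3$ have one--dimensional centre and one--dimensional left annihilator, so ``left annihilator and centre'' do not separate them. You need the right annihilator (dimension $2$ for $F_n^2$, dimension $1$ for $F_n^3$), or some equivalent invariant.

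\emph{Step~1 is the real gap.} The assertion that iterated right products $e_{k+1}=e_k\circ e_1$ remain in general position down the whole filtration is exactly the heart of the matter, and ``one would rewrite the generators of $\mathbf A^{k+1}/\mathbf A^{k+2}$ via the Zinbiel identity and obtain $\mathbf A^{k+1}=0$'' is not a proof: a priori the second generator $e_n$ could contribute to $\mathbf A^{k+1}/\mathbf A^{k+2}$ even when $e_k\circ e_1$ does not. A clean way through is to pass to the commutative associative symmetrisation $x*y:=x\circ y+y\circ x$ and argue that some element has nonzero $(n-1)$st $*$-power; this is where the work in \cite{adashev} lies.
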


	\subsection{Basic definitions and methods}
	Throughout this paper, we are using the notations and methods well written in \cite{ha17,hac16}
	and adapted for the Zinbiel case with some modifications.
	From now, we will give only some important definitions.
	
	Let $({\bf A}, \circ)$ be a Zinbiel algebra and $\mathbb V$ a vector space. Then the $\mathbb C$-linear space ${\rm Z}^{2}\left(
	\bf A,\mathbb V \right) $ is defined as the set of all  bilinear maps $\theta  \colon {\bf A} \times {\bf A} \longrightarrow {\mathbb V}$,
	such that 
	\[\theta(x\circ y,z)=\theta(x,y\circ z+z\circ y).\]
	Its elements will be called {\it cocycles}. For a
	linear map $f$ from $\bf A$ to  $\mathbb V$, if we write $\delta f\colon {\bf A} \times
	{\bf A} \longrightarrow {\mathbb V}$ by $\delta f  (x, y) =f(x \circ y)$, then $\delta f\in {\rm Z}^{2}\left( {\bf A},{\mathbb V} \right) $. We define ${\rm B}^{2}\left(
	{\bf A},{\mathbb V}\right) =\left\{ \theta =\delta f\ : f\in 
	{\rm Hom}\left( {\bf A},{\mathbb V}\right) \right\} $.
	One can easily check that ${\rm B}^{2}(\bf A,\mathbb V)$ is a linear subspace of ${\rm Z}^{2}\left( {\bf A},{\mathbb V}\right) $ whose elements are called
	{\it coboundaries}. We define the {\it second cohomology space} ${\rm H}^{2}\left( {\bf A},{\mathbb V}\right) $ as the quotient space ${\rm Z}^{2}
	\left( {\bf A},{\mathbb V}\right) \big/{\rm B}^{2}\left( {\bf A},{\mathbb V}\right) $.
 
	\
	
	Let $\aut(\mathbf{A}) $ be the automorphism group of the Zinbiel  algebra $\mathbf{A} $ and let $\phi \in \aut(\mathbf{A})$. For $\theta \in
	{\rm Z}^{2}\left( {\bf A},{\mathbb V}\right) $ define $\phi \theta (x,y)
	=\theta \left( \phi \left( x\right) ,\phi \left( y\right) \right) $. Then $\phi \theta \in {\rm Z}^{2}\left( {\bf A},{\mathbb V}\right) $. So, $\aut(\mathbf{A})$
	acts on ${\rm Z}^{2}\left( {\bf A},{\mathbb V}\right) $. It is easy to verify that
	${\rm B}^{2}\left( {\bf A},{\mathbb V}\right) $ is invariant under the action of $\aut(\mathbf{A})$ and so we have that $\aut(\mathbf{A})$ acts on ${\rm H}^{2}\left( {\bf A},{\mathbb V}\right)$.
	
	\
	
	Let $\bf A$ be a Zinbiel  algebra of dimension $m<n,$ and ${\mathbb V}$ be a $\mathbb C$-vector
	space of dimension $n-m$. For any $\theta \in {\rm Z}^{2}\left(
	{\bf A},{\mathbb V}\right) $ define on the linear space ${\bf A}_{\theta } := {\bf A}\oplus {\mathbb V}$ the
	bilinear product `` $\left[ -,-\right] _{{\bf A}_{\theta }}$'' by $\left[ x+x^{\prime },y+y^{\prime }\right] _{{\bf A}_{\theta }}=
	x\circ y +\theta(x,y) $ for all $x,y\in {\bf A},x^{\prime },y^{\prime }\in {\mathbb V}$.
	The algebra ${\bf A}_{\theta }$ is a Zinbiel  algebra which is called an $(n-m)$-{\it dimensional central extension} of ${\bf A}$ by ${\mathbb V}$. Indeed, we have, in a straightforward way, that ${\bf A_{\theta}}$ is a Zinbiel algebra if and only if $\theta \in {\rm Z}^{2}({\bf A}, {\mathbb V})$.
	
	We also call the
	set $\ann(\theta)=\left\{ x\in {\bf A}:\theta \left( x, {\bf A} \right)+ \theta \left({\bf A} ,x\right) =0\right\} $
	the {\it annihilator} of $\theta $. We recall that the {\it annihilator} of an  algebra ${\bf A}$ is defined as
	the ideal $\ann(  \mathbf{A} ) =\left\{ x\in {\bf A}:  x \circ {\bf A}+ {\bf A}\circ x =0\right\}$ and observe
	that
	$\ann({\bf A}_{\theta}) = (\ann(\theta) \cap \ann(\mathbf{A}) )\oplus \mathbb{V}.$
	
	\
	
	We have the next  key result:
	
	\begin{lemma}
		Let ${\bf A}$ be an $n$-dimensional Zinbiel algebra such that $\dim(\ann({\bf A}))=m\neq0$. Then there exists, up to isomorphism, a unique $(n-m)$-dimensional Zinbiel  algebra $\mathbf{A}'$ and a bilinear map 
		$\theta \in {\rm Z}^{2}({\bf A}, {\mathbb V})$ with $\ann({\bf A})\cap \ann(\theta)=0$, where $\mathbb{V}$ is a vector space of dimension $m,$ such that $\mathbf{A} \cong {\mathbf{A}'}_{\theta}$ and
		${\bf A}/\ann({\bf A})\cong \mathbf{A}'$.
	\end{lemma}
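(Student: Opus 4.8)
The plan is to realize $\mathbf{A}'$ as the quotient $\mathbf{A}/\ann(\mathbf{A})$ and to reconstruct $\theta$ from a choice of linear complement. Concretely, set $\mathbb{V}=\ann(\mathbf{A})$, so that $\dim\mathbb{V}=m$ and $\dim(\mathbf{A}/\mathbb{V})=n-m$, and fix a subspace $W\subseteq\mathbf{A}$ with $\mathbf{A}=W\oplus\mathbb{V}$ as vector spaces. Write $\mathbf{A}'=\mathbf{A}/\mathbb{V}$ and let $\pi\colon\mathbf{A}\to\mathbf{A}'$ be the canonical projection; then $\pi|_W\colon W\to\mathbf{A}'$ is a linear isomorphism. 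I would transport the multiplication of $\mathbf{A}'$ to $W$ through $\pi|_W$, or equivalently, for $x,y\in W$ decompose $x\circ y=\mu(x,y)+\theta(x,y)$ with $\mu(x,y)\in W$ and $\theta(x,y)\in\mathbb{V}$, so that $\mu$ is a copy of the multiplication of $\mathbf{A}'$ and $\theta\colon W\times W\to\mathbb{V}$ is the candidate cocycle (viewed on $\mathbf{A}'$ via $\pi|_W$). Note $\pi|_W$ is then an algebra isomorphism $(W,\mu)\cong\mathbf{A}'$, since $\pi(x\circ y)=\pi(\mu(x,y))=\pi(x)\circ\pi(y)$; in particular $(W,\mu)$, hence $\mathbf{A}'$, is a Zinbiel algebra.

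The observation used throughout is that, since $\mathbb{V}=\ann(\mathbf{A})$, every product in $\mathbf{A}$ having at least one argument in $\mathbb{V}$ vanishes; consequently for all $x_1,x_2\in W$ and $v_1,v_2\in\mathbb{V}$ one has $(x_1+v_1)\circ(x_2+v_2)=\mu(x_1,x_2)+\theta(x_1,x_2)$. Next I would expand the Zinbiel identity $(x\circ y)\circ z=x\circ(y\circ z)+x\circ(z\circ y)$ in $\mathbf{A}$ for $x,y,z\in W$ and compare the $\mathbb{V}$-components; the terms $\theta(\cdot,\cdot)$ sitting inside a further product vanish by the observation, and what survives is exactly $\theta(\mu(x,y),z)=\theta\bigl(x,\mu(y,z)+\mu(z,y)\bigr)$, i.e. $\theta\in{\rm Z}^2(\mathbf{A}',\mathbb{V})$. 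With $\mu$ and $\theta$ in hand, the linear map $W\oplus\mathbb{V}\to\mathbf{A}'\oplus\mathbb{V}$, $x+v\mapsto\pi(x)+v$, respects products (again by the observation) and so gives an isomorphism $\mathbf{A}\cong\mathbf{A}'_{\theta}$, while $\mathbf{A}/\ann(\mathbf{A})\cong\mathbf{A}'$ holds by construction.

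It remains to secure the condition $\ann(\mathbf{A}')\cap\ann(\theta)=0$ and the uniqueness of $\mathbf{A}'$, both of which I would deduce from the identity $\ann(\mathbf{A}'_{\theta})=(\ann(\theta)\cap\ann(\mathbf{A}'))\oplus\mathbb{V}$ recorded just above the lemma. From $\mathbf{A}\cong\mathbf{A}'_{\theta}$ we get $m=\dim\ann(\mathbf{A})=\dim(\ann(\theta)\cap\ann(\mathbf{A}'))+m$, forcing $\ann(\theta)\cap\ann(\mathbf{A}')=0$. For uniqueness, if also $\mathbf{A}\cong\mathbf{B}_{\psi}$ with $\dim\mathbf{B}=n-m$, $\psi\in{\rm Z}^2(\mathbf{B},\mathbb{V})$ and $\ann(\mathbf{B})\cap\ann(\psi)=0$, then $\ann(\mathbf{B}_{\psi})=\mathbb{V}$, whence $\mathbf{B}\cong\mathbf{B}_{\psi}/\mathbb{V}=\mathbf{B}_{\psi}/\ann(\mathbf{B}_{\psi})\cong\mathbf{A}/\ann(\mathbf{A})\cong\mathbf{A}'$. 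I do not anticipate a serious obstacle: the one point requiring care is to take $\mathbb{V}$ to be literally $\ann(\mathbf{A})$ rather than an abstract $m$-dimensional space, so that all products meeting $\mathbb{V}$ vanish; granted that, the cocycle verification is just a routine comparison of components, and the annihilator bookkeeping is a dimension count.
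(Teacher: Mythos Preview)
The paper does not actually prove this lemma; it is stated without proof as a ``key result'' imported from the general Skjelbred--Sund framework (the paper cites \cite{ha17,hac16} for the method). Your argument is correct and is exactly the standard proof one would expect: split off a linear complement $W$ to $\ann(\mathbf{A})$, decompose the product into its $W$- and $\mathbb{V}$-components to obtain $\mu$ and $\theta$, verify the Zinbiel cocycle condition by projecting the defining identity onto $\mathbb{V}$, and then read off both $\ann(\mathbf{A}')\cap\ann(\theta)=0$ and the uniqueness of $\mathbf{A}'$ from the annihilator formula $\ann(\mathbf{A}'_{\theta})=(\ann(\theta)\cap\ann(\mathbf{A}'))\oplus\mathbb{V}$ quoted just above the lemma. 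One small remark: as printed, the lemma has $\theta\in{\rm Z}^2(\mathbf{A},\mathbb{V})$ and $\ann(\mathbf{A})\cap\ann(\theta)=0$, which are evident misprints for $\mathbf{A}'$; you have correctly addressed the intended statement.
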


	\
	
	However, in order to solve the isomorphism problem we need to study the
	action of $\aut(\mathbf{A})$ on ${\rm H}^{2}\left( {\bf A},{\mathbb V}
	\right) $. To do that, let us fix $e_{1},\ldots ,e_{s}$ a basis of ${\mathbb V}$, and $
	\theta \in {\rm Z}^{2}\left( {\bf A},{\mathbb V}\right) $. Then $\theta $ can be uniquely
	written as $\theta \left( x,y\right) =
	\displaystyle \sum_{i=1}^{s} \theta _{i}\left( x,y\right) e_{i}$, where $\theta _{i}\in
	{\rm Z}^{2}\left( {\bf A},\mathbb C \right) $. Moreover, $\ann(\theta)=\ann(\theta _{1})\cap \ann(\theta _{2})\cap\ldots \cap \ann(\theta _{s})$. Further, $\theta \in
	{\rm B}^{2}\left( {\bf A},{\mathbb V}\right) $\ if and only if all $\theta _{i}\in {\rm B}^{2}\left( {\bf A},
	\mathbb C\right) $.
	
	\;

	\begin{definition}
		Let ${\bf A}$ be an algebra and $I$ be a subspace of ${\rm Ann}({\bf A})$. If ${\bf A}={\bf A}_0 \oplus I$
		then $I$ is called an {\it annihilator component} of ${\bf A}$.
	\end{definition}
	
	\begin{definition}
		A central extension of an algebra $\bf A$ without annihilator component is called a non-split central extension.
	\end{definition}
	
	It is not difficult to prove (see \cite[Lemma 13]{hac16}), that given a Zinbiel  algebra ${\bf A}_{\theta}$, if we write as
	above $\theta \left( x,y \right) = \displaystyle \sum_{i=1}^{s} \theta_{i}\left( x,y \right) e_{i}\in {\rm Z}^{2}\left( {\bf A},{\mathbb V} \right) $ and we have
	$\ann(\theta)\cap \ann( {\bf A}) =0$, then 
	${\bf A}_{\theta}$ has an
	annihilator component if and only if $\left[ \theta _{1}\right] ,\left[
	\theta _{2}\right] ,\ldots ,\left[ \theta _{s}\right] $ are linearly
	dependent in ${\rm H}^{2}\left( {\bf A},\mathbb C\right) $.
	
	\;
	
	Let ${\mathbb V}$ be a finite-dimensional vector space. The {\it Grassmannian} $G_{k}\left( {\mathbb V}\right) $ is the set of all $k$-dimensional
	linear subspaces of $ {\mathbb V}$. Let $G_{s}\left( {\rm H}^{2}\left( {\bf A},{\mathbb C}\right) \right) $ be the Grassmannian of subspaces of dimension $s$ in
	${\rm H}^{2}\left( {\bf A},{\mathbb C}\right) $. There is a natural action of $\aut(\mathbf{A})$ on $G_{s}\left( {\rm H}^{2}\left( {\bf A},{\mathbb C}\right) \right) $.
	Let $\phi \in \aut(\mathbf{A})$. For $W=\left\langle
	\left[ \theta _{1}\right] ,\left[ \theta _{2}\right] ,\dots,\left[ \theta _{s}
	\right] \right\rangle \in G_{s}\left( {\rm H}^{2}\left( {\bf A},{\mathbb C}
	\right) \right) $ define $\phi W=\left\langle \left[ \phi \theta _{1}\right]
	,\left[ \phi \theta _{2}\right] ,\dots,\left[ \phi \theta _{s}\right]
	\right\rangle $. Then $\phi W\in G_{s}\left( {\rm H}^{2}\left( {\bf A},{\mathbb C} \right) \right) $. We denote the orbit of $W\in G_{s}\left(
	{\rm H}^{2}\left( {\bf A},{\mathbb C}\right) \right) $ under the action of $\aut(\mathbf{A})$ by $\operatorname{Orb}(W)$. Since given
	\[
	W_{1}=\left\langle \left[ \theta _{1}\right] ,\left[ \theta _{2}\right] ,\dots,
	\left[ \theta _{s}\right] \right\rangle ,W_{2}=\left\langle \left[ \vartheta
	_{1}\right] ,\left[ \vartheta _{2}\right] ,\dots,\left[ \vartheta _{s}\right]
	\right\rangle \in G_{s}\left( {\rm H}^{2}\left( {\bf A},{\mathbb C}\right)
	\right),
	\]
	we easily have that in case $W_{1}=W_{2}$, then 
	$ \bigcap\limits_{i=1}^{s} \ann(\theta_{i})\cap \ann( {\bf A}) = \bigcap\limits_{i=1}^{s}
	\ann (\vartheta_{i})\cap \ann ( {\bf A}) $, and so we can introduce
	the set
	\[
	T_{s}(\mathbf{A}) =\left\{ W=\left\langle \left[ \theta _{1}\right] ,
	\left[ \theta _{2}\right] ,\dots,\left[ \theta _{s}\right] \right\rangle \in
	G_{s}\left( {\rm H}^{2}\left( {\bf A},{\mathbb C}\right) \right) : \bigcap\limits_{i=1}^{s}\ann(\theta _{i})\cap \ann(\mathbf{A}) =0\right\},
	\]
	which is stable under the action of $\aut(\mathbf{A})$.
	
	\
	
	Now, let ${\mathbb V}$ be an $s$-dimensional linear space and let us denote by
	$E\left( {\bf A},{\mathbb V}\right) $ the set of all {\it non-split $s$-dimensional central extensions} of ${\bf A}$ by
	${\mathbb V}$. We can write
	\[
	E\left( {\bf A},{\mathbb V}\right) =\left\{ {\bf A}_{\theta }:\theta \left( x,y\right) = \sum_{i=1}^{s}\theta _{i}\left( x,y\right) e_{i} \ \ \text{and} \ \ \left\langle \left[ \theta _{1}\right] ,\left[ \theta _{2}\right] ,\dots,
	\left[ \theta _{s}\right] \right\rangle \in T_{s}(\mathbf{A}) \right\} .
	\]
	We also have the next result, which can be proved as in \cite[Lemma 17]{hac16}.
	
	\begin{lemma}
		Let ${\bf A}_{\theta },{\bf A}_{\vartheta }\in E\left( {\bf A},{\mathbb V}\right) $. Suppose that $\theta \left( x,y\right) =  \displaystyle \sum_{i=1}^{s}
		\theta _{i}\left( x,y\right) e_{i}$ and $\vartheta \left( x,y\right) =
		\displaystyle \sum_{i=1}^{s} \vartheta _{i}\left( x,y\right) e_{i}$.
		Then the Zinbiel algebras ${\bf A}_{\theta }$ and ${\bf A}_{\vartheta } $ are isomorphic
		if and only if \[\operatorname{Orb}\left\langle \left[ \theta _{1}\right] ,
		\left[ \theta _{2}\right] ,\dots,\left[ \theta _{s}\right] \right\rangle =
		\operatorname{Orb}\left\langle \left[ \vartheta _{1}\right] ,\left[ \vartheta
		_{2}\right] ,\dots,\left[ \vartheta _{s}\right] \right\rangle .\]
	\end{lemma}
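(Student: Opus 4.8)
The plan is to prove the two implications separately, in both cases exploiting the fact that the subspace $\mathbb{V}$ is canonically attached to $\mathbf{A}_\theta$ as its annihilator. The first preliminary step is to record that for $\mathbf{A}_\theta\in E(\mathbf{A},\mathbb{V})$ one has $\ann(\mathbf{A}_\theta)=\mathbb{V}$: membership in $E(\mathbf{A},\mathbb{V})$ means $\langle[\theta_1],\dots,[\theta_s]\rangle\in T_s(\mathbf{A})$, i.e. $\ann(\theta)\cap\ann(\mathbf{A})=\bigcap_{i=1}^s\ann(\theta_i)\cap\ann(\mathbf{A})=0$, and combined with the identity $\ann(\mathbf{A}_\theta)=(\ann(\theta)\cap\ann(\mathbf{A}))\oplus\mathbb{V}$ recorded above this gives the claim; the same applies to $\mathbf{A}_\vartheta$. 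I would also recall, from the remark preceding the lemma, that since neither $\mathbf{A}_\theta$ nor $\mathbf{A}_\vartheta$ has an annihilator component, the families $[\theta_1],\dots,[\theta_s]$ and $[\vartheta_1],\dots,[\vartheta_s]$ are each linearly independent in ${\rm H}^2(\mathbf{A},\mathbb{C})$, so that $\langle[\theta_1],\dots,[\theta_s]\rangle$ and $\langle[\vartheta_1],\dots,[\vartheta_s]\rangle$ are honest $s$-dimensional points of the Grassmannian.

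For the ``only if'' direction, let $\psi\colon\mathbf{A}_\theta\to\mathbf{A}_\vartheta$ be an isomorphism. Since any isomorphism maps the annihilator onto the annihilator and $\ann(\mathbf{A}_\theta)=\ann(\mathbf{A}_\vartheta)=\mathbb{V}$, we get $\psi(\mathbb{V})=\mathbb{V}$; hence $\psi$ descends to an isomorphism between the quotient algebras $\mathbf{A}_\theta/\mathbb{V}$ and $\mathbf{A}_\vartheta/\mathbb{V}$, both of which are canonically identified with $(\mathbf{A},\circ)$, yielding some $\phi\in\aut(\mathbf{A})$. Writing $\psi(a+v)=\phi(a)+\mu(a)+\tau(v)$ with $\mu\colon\mathbf{A}\to\mathbb{V}$ linear and $\tau\colon\mathbb{V}\to\mathbb{V}$ invertible (invertibility because $\tau=\psi|_{\mathbb{V}}$ is a bijection of $\mathbb{V}$), I would substitute into $\psi([x,y]_{\mathbf{A}_\theta})=[\psi(x),\psi(y)]_{\mathbf{A}_\vartheta}$ and compare components: the $\mathbf{A}$-component holds automatically because $\phi$ is an automorphism, while the $\mathbb{V}$-component reads $\tau\circ\theta+\delta\mu=\phi\vartheta$ in ${\rm Z}^2(\mathbf{A},\mathbb{V})$. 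Expanding in the basis $e_1,\dots,e_s$ and writing $\tau(e_i)=\sum_j\tau_{ji}e_j$, this becomes $[\phi\vartheta_j]=\sum_i\tau_{ji}[\theta_i]$ for all $j$, with $(\tau_{ji})$ invertible; hence $\phi\langle[\vartheta_1],\dots,[\vartheta_s]\rangle=\langle[\theta_1],\dots,[\theta_s]\rangle$ and the two subspaces lie in the same $\aut(\mathbf{A})$-orbit.

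For the ``if'' direction, suppose conversely there is $\phi\in\aut(\mathbf{A})$ with $\phi\langle[\vartheta_1],\dots,[\vartheta_s]\rangle=\langle[\theta_1],\dots,[\theta_s]\rangle$. Since both sides are $s$-dimensional, there exist an invertible matrix $(\tau_{ji})$ and linear forms $\mu_j\colon\mathbf{A}\to\mathbb{C}$ with $\phi\vartheta_j=\sum_i\tau_{ji}\theta_i+\delta\mu_j$. Setting $\tau(e_i)=\sum_j\tau_{ji}e_j$, $\mu=\sum_j\mu_j e_j$, and $\psi(a+v)=\phi(a)+\mu(a)+\tau(v)$, the computation of the previous paragraph (now run in reverse) shows that $\psi$ is an algebra homomorphism $\mathbf{A}_\theta\to\mathbf{A}_\vartheta$, and it is bijective since $\tau$ is invertible and the map it induces on $\mathbf{A}_\theta/\mathbb{V}\cong\mathbf{A}$ is $\phi$. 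Therefore $\mathbf{A}_\theta\cong\mathbf{A}_\vartheta$, completing the proof along the lines of \cite[Lemma 17]{hac16}.

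The bulk of the work, namely the two substitutions into the homomorphism identity and the componentwise comparison, is routine bookkeeping. The step that requires genuine care is the inference, in the ``only if'' part, that $[\phi\vartheta_j]=\sum_i\tau_{ji}[\theta_i]$ with $(\tau_{ji})$ invertible forces equality (not merely inclusion) of $\langle[\phi\vartheta_1],\dots,[\phi\vartheta_s]\rangle$ and $\langle[\theta_1],\dots,[\theta_s]\rangle$: this is exactly where one needs the linear independence of the cohomology classes, guaranteed by the absence of an annihilator component, together with invertibility of $\tau$ and of the action of $\phi$ on ${\rm H}^2(\mathbf{A},\mathbb{C})$. One must also take care to keep track of the direction in which $\phi$ moves subspaces when translating between ``$\mathbf{A}_\theta\cong\mathbf{A}_\vartheta$'' and ``same orbit''.
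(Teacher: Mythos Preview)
Your proof is correct and complete. The paper itself does not supply a proof of this lemma but merely defers to \cite[Lemma 17]{hac16}; your argument is precisely the standard one underlying that reference, so the approaches coincide.
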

	
	From here, there exists a one-to-one correspondence between the set of $\aut(\mathbf{A})$-orbits on $T_{s}\left( {\bf A}\right) $ and the set of
	isomorphism classes of $E\left( {\bf A},{\mathbb V}\right) $. Consequently we have a
	procedure that allows us, given the Zinbiel algebra $\mathbf{A}'$ of
	dimension $n-s$, to construct all non-split central extensions of $\mathbf{A}'$. This procedure would be:

	{\centerline {\textsl{Procedure}}}
	
	\begin{enumerate}
		\item For a given Zinbiel algebra $\mathbf{A}'$ of dimension $n-s $, determine ${\rm H}^{2}( \mathbf{A}',{\mathbb C}) $, $\ann(\mathbf{A}')$ and $\aut(\mathbf{A}')$.
		
		\item Determine the set of $\aut(\mathbf{A}')$-orbits on $T_{s}(\mathbf{A}') $.
		
		\item For each orbit, construct the Zinbiel algebra corresponding to a
		representative of it.
	\end{enumerate}
	
	\
	
	Finally, let us introduce some of notation. Let ${\bf A}$ be a Zinbiel algebra with
	a basis $e_{1},e_{2},\dots,e_{n}$. Then by $\Delta _{i,j}$\ we will denote the
	bilinear form
	$\Delta _{i,j} \colon {\bf A}\times {\bf A}\longrightarrow {\mathbb C}$
	with $\Delta _{i,j}\left( e_{l},e_{m}\right) = \delta_{il}\delta_{jm}$.
	Then the set $\left\{ \Delta_{i,j}:1\leq i, j\leq n\right\} $ is a basis for the linear space of
	the bilinear forms on ${\bf A}$. Then every $\theta \in
	{\rm Z}^{2}\left( {\bf A},{\mathbb C}\right) $ can be uniquely written as $
	\theta = \displaystyle \sum_{1\leq i,j\leq n} c_{ij}\Delta _{{i},{j}}$, where $
	c_{ij}\in {\mathbb C}$.

	\section{Central extension of filiform Zinbiel algebras}\label{S:fil}
	
	\begin{proposition} Let $F_n^1,F_n^2$ and $F_n^3$ be $n$-dimensional  filiform Zinbiel algebras defined in Theorem \ref{th-filiform}. Then:
		
		\begin{itemize}
			\item A basis of ${\rm Z}^{2}(F_n^k,\mathbb{C})$ is formed by the following cocycles
			\[\begin{array}{l}
			{\rm Z}^{2}(F_n^1,\mathbb{C})=\langle\Delta_{1,1},\Delta_{1,n},\Delta_{n,1},\Delta_{n,n},\sum\limits_{i=1}^{s-1}C_{s-1}^{i-1}\Delta_{i,s-i}; \ 3\leq s\leq n\rangle, \\
			{\rm Z}^{2}(F_n^k,\mathbb{C})=\langle\Delta_{1,1},\Delta_{1,n},\Delta_{n,1},\Delta_{n,n},\sum\limits_{i=1}^{s-1}C_{s-1}^{i-1}\Delta_{i,s-i}; \ 3\leq s\leq n-1\rangle, k= 2, 3.
			\end{array}\]
			\item A basis of ${\rm B}^2(F_n^k,\mathbb{C})$  is formed by the following coboundaries
			\[\begin{array}{l}
			{\rm B}^2(F_n^1,\mathbb{C})=\langle\Delta_{1,1},\sum\limits_{i=1}^{s-1}C_{s-1}^{i-1}\Delta_{i,s-i}, \ 3\leq s\leq n-1\rangle, \\
			{\rm B}^2(F_n^2,\mathbb{C})=\langle\Delta_{1,1},\sum\limits_{i=1}^{s-1}C_{s-1}^{i-1}\Delta_{i,s-i}, \ 3\leq s\leq n-2,\sum\limits_{i=1}^{n-2}C_{n-2}^{i-1}\Delta_{i,n-1-i}+\Delta_{n,1}\rangle,\\
			{\rm B}^2(F_n^3,\mathbb{C})=\langle\Delta_{1,1},\sum\limits_{i=1}^{s-1}C_{s-1}^{i-1}\Delta_{i,s-i}, \ 3\leq s\leq n-2,\sum\limits_{i=1}^{n-2}C_{n-2}^{i-1}\Delta_{i,n-1-i}+\Delta_{n,n}\rangle.
			\end{array}\]
			\item A basis of ${\rm H}^2(F_n^k,\mathbb{C})$ is formed by the following cocycles
			\[\begin{array}{l}
			{\rm H}^2(F_n^1,\mathbb{C})=\langle[\Delta_{1,n}], [\Delta_{n,1}], [\Delta_{n,n}], [\sum\limits_{i=1}^{n-1}C_{n-1}^{i-1}\Delta_{i,n-i}] \rangle, \\
			{\rm H}^2(F_n^k,\mathbb{C})=\langle [\Delta_{1,n}],[\Delta_{n,1}],[\Delta_{n,n}] \rangle, \quad k=2, 3.
			
			\end{array}\]
		\end{itemize}
	\end{proposition}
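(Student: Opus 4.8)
The plan is the standard one for such cohomology computations: for each $F_n^k$ I would solve the defining linear system of ${\rm Z}^{2}(F_n^k,\mathbb C)$ for the coefficients of a generic bilinear form $\theta=\sum_{i,j}c_{ij}\Delta_{i,j}$ (writing $c_{ab}=\theta(e_a,e_b)$), read ${\rm B}^{2}(F_n^k,\mathbb C)$ off the multiplication table, and pass to the quotient. Throughout I set $\nabla_s:=\sum_{i=1}^{s-1}C_{s-1}^{i-1}\Delta_{i,s-i}$, so that $\nabla_2=\Delta_{1,1}$. Using Pascal's rule $C_{b+c-1}^{b}+C_{b+c-1}^{c}=C_{b+c}^{c}$ (valid since $C_{b+c-1}^{b}=C_{b+c-1}^{c-1}$) one gets $e_b\circ e_c+e_c\circ e_b=C_{b+c}^{c}e_{b+c}$ for $2\le b+c\le n-1$; hence for $F_n^1$ the cocycle identity $\theta(e_a\circ e_b,e_c)=\theta(e_a,e_b\circ e_c+e_c\circ e_b)$, evaluated on basis triples, reduces to
\[
[a+b\le n-1]\,C_{a+b-1}^{b}\,c_{a+b,c}=[b+c\le n-1]\,C_{b+c}^{c}\,c_{a,b+c},\qquad a,b,c\ge 1.
\]
The decisive remark is that every such equation only relates coefficients $c_{ij}$ having one fixed value of the weight $i+j$, so ${\rm Z}^{2}(F_n^1,\mathbb C)$ decomposes as a direct sum over weights $2\le s\le 2n$.

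I would then solve weight by weight. Weights $s=2$ and $s=2n$ contribute a single unconstrained coefficient, giving $\Delta_{1,1}$ and $\Delta_{n,n}$. For $3\le s\le n$ the equations with $a=1$ read $c_{b+1,c}=C_{s-1}^{c}\,c_{1,s-1}$ (where $b+c=s-1$), expressing every $c_{ij}$ with $i\ge 2$ in terms of $c_{1,s-1}$, so the weight-$s$ subspace is at most one-dimensional; a short binomial check (both sides of the displayed relation equal $(s-1)!/\big((a-1)!\,b!\,c!\big)$ on the candidate) shows it is spanned by $\nabla_s$. For $s=n+1$ the triples $(1,i-1,n+1-i)$ force $c_{i,n+1-i}=0$ for $2\le i\le n-1$, while $c_{1,n}$ and $c_{n,1}$ remain unconstrained, giving $\langle\Delta_{1,n},\Delta_{n,1}\rangle$; and for $n+2\le s\le 2n-1$ inspection of the triples $(1,i-1,s-i)$ together with $(n,b,c)$ (with $b+c=s-n$) kills all remaining coefficients. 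This produces the asserted ${\rm Z}^{2}(F_n^1,\mathbb C)$. For $F_n^2$ and $F_n^3$ the extra product $e_n\circ e_1=e_{n-1}$, respectively $e_n\circ e_n=e_{n-1}$, adds correction terms to the cocycle relation in the cases $(a,b)=(n,1)$ or $(b,c)\in\{(n,1),(1,n)\}$, respectively $(a,b)=(n,n)$ or $(b,c)=(n,n)$; these new equations are no longer homogeneous in $i+j$, but unwinding them together with the weight-preserving ones yields $c_{i,n-1}=0$ for every $i$. Since $\nabla_n$ is the unique member of the $F_n^1$-basis involving $c_{1,n-1}$, this removes exactly $\nabla_n$ and leaves $\Delta_{1,1},\Delta_{1,n},\Delta_{n,1},\Delta_{n,n},\nabla_3,\dots,\nabla_{n-1}$, as claimed.

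For the coboundaries, if $f$ is a linear functional with $f_j:=f(e_j)$, then $\delta f(e_i,e_j)=f(e_i\circ e_j)$; since $e_i\circ e_j=C_{i+j-1}^{j}e_{i+j}$ for $2\le i+j\le n-1$, grouping by $i+j=s$ gives $\delta f=\sum_{s=2}^{n-1}f_s\nabla_s$ in $F_n^1$ (the parameters $f_1$ and $f_n$ disappear). In $F_n^2$ the extra product contributes an additional summand $f_{n-1}\Delta_{n,1}$, so $\delta f=\sum_{s=2}^{n-2}f_s\nabla_s+f_{n-1}(\nabla_{n-1}+\Delta_{n,1})$, and in $F_n^3$ the same holds with $\Delta_{n,n}$ in place of $\Delta_{n,1}$. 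These are exactly the stated bases of ${\rm B}^{2}(F_n^k,\mathbb C)$.

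Finally, comparing the bases of ${\rm Z}^{2}$ and ${\rm B}^{2}$: for $F_n^1$ one is left with $\langle[\Delta_{1,n}],[\Delta_{n,1}],[\Delta_{n,n}],[\nabla_n]\rangle$; for $k=2,3$ the relation $\nabla_{n-1}+\Delta_{n,1}\in{\rm B}^{2}$ (resp. $\nabla_{n-1}+\Delta_{n,n}\in{\rm B}^{2}$) makes $[\nabla_{n-1}]$ equal to $-[\Delta_{n,1}]$ (resp. $-[\Delta_{n,n}]$), so one is left with $\langle[\Delta_{1,n}],[\Delta_{n,1}],[\Delta_{n,n}]\rangle$; the dimension count $\dim{\rm Z}^{2}-\dim{\rm B}^{2}$ confirms these are bases. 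I expect the main obstacle to be the bookkeeping for $F_n^2$ and $F_n^3$: because their cocycle equations are inhomogeneous in $i+j$, one must check carefully that the off-grade equations neither spare $\nabla_n$ nor introduce new cocycles in the weights between $n+2$ and $2n-1$ (in a few of these weights a ``naive'' triple becomes one of the special ones and must be replaced); everything else reduces to repeated use of Pascal's rule.
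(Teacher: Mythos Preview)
Your proposal is correct and follows exactly the approach the paper has in mind: the paper's proof is the single line ``The proof follows directly from the definition of a cocycle,'' and what you have written is a careful unpacking of precisely that direct computation (weight decomposition of the cocycle system for $F_n^1$, then tracking the extra products in $F_n^2,F_n^3$, reading off $\delta f$, and passing to the quotient). Your closing caveat about re-deriving the vanishing of the borderline coefficients such as $c_{n,2}$ and $c_{2,n}$ in the $F_n^2,F_n^3$ cases is exactly the bookkeeping that needs to be done, and it goes through as you indicate.
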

	\begin{proof}
		The proof follows directly from the definition of a cocycle.
	\end{proof}
	
	\begin{proposition}
		Let $\phi^n_{k}\in \aut(F_n^k)$. Then
		
		\begin{longtable}{cc}
$			\phi_{1}^n= \begin{pmatrix}
				a_{1,1} &0  & 0 &\dots  &  0& 0 \\
				a_{2,1} &a_{1,1}^2  & 0 & \dots & 0 & 0 \\
				a_{3,1}& * & a_{1,1}^3 &\dots  &  0& 0 \\
				\vdots & \vdots & \vdots &\ddots  & \vdots & \vdots \\
				a_{n-1,1} &*  &*  &  & a_{1,1}^{n-1} & a_{n-1,n} \\
				a_{n,1} & 0 & 0 & \dots & 0 & a_{n,n} \\
			\end{pmatrix},$  &
$			\phi_{2}^n=
			\begin{pmatrix}
				a_{1,1} &0  & 0 &\dots  &  0& 0 \\
				a_{2,1} &a_{1,1}^2  & 0 & \dots & 0 & 0 \\
				a_{3,1}& * & a_{1,1}^3 &\dots  &  0& 0 \\
				\vdots & \vdots & \vdots &\ddots  & \vdots & \vdots \\
				a_{n-1,1} &*  &*  &  & a_{1,1}^{n-1} & a_{n-1,n} \\
				a_{n,1} & 0 & 0 & \dots & 0 & a_{1,1}^{n-2} \\
			\end{pmatrix},$ \\
\multicolumn{2}{c}{$			\phi_{3}^n=
			\begin{pmatrix}
				a_{1,1} &0  & 0 &\dots  &  0& 0 \\
				a_{2,1} &a_{1,1}^2  & 0 & \dots & 0 & 0 \\
				a_{3,1}& * & a_{1,1}^3 &\dots  &  0& 0 \\
				\vdots & \vdots & \vdots &\ddots  & \vdots & \vdots \\
				a_{n-1,1} &*  &*  &  & a_{1,1}^{n-1} & a_{n-1,n} \\
				a_{n,1} & 0 & 0 & \dots & 0 & a_{1,1}^{(n-1)/2} \\
			\end{pmatrix} $}
		\end{longtable}
	\end{proposition}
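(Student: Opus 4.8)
The plan is to combine the characteristic flag coming from the derived series with a direct inspection of the defining products. Fix $\phi\in\aut(F_n^k)$ and write $\phi(e_j)=\sum_{i=1}^{n}a_{i,j}e_i$. From the multiplication tables of Theorem \ref{th-filiform} one checks in all three cases that $\mathbf{A}^i=\langle e_i,e_{i+1},\dots,e_{n-1}\rangle$ for $2\le i\le n-1$ and $\mathbf{A}^n=0$. Since every automorphism preserves each $\mathbf{A}^i$, this forces $\phi(e_j)\in\langle e_j,\dots,e_{n-1}\rangle$ for $2\le j\le n-1$, i.e. $a_{i,j}=0$ whenever $2\le j\le n-1$ and ($i<j$ or $i=n$) --- exactly the interior zero pattern and triangular shape of the displayed matrices. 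Invertibility of $\phi$, together with the characteristic subspaces available ($\mathbf{A}^2+\ann(F_n^1)$, resp. $\mathbf{A}^2+\{x:\mathbf{A}\circ x=0\}$, both of dimension $n-1$ and both avoiding $e_1$, for $F_n^1$ and $F_n^2$; a small extra argument on the image of $\langle e_1,\dots,e_{n-1}\rangle$ for $F_n^3$), then gives $a_{1,1}\neq0$, and afterwards $a_{n,n}\neq0$.

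I would next pin down the diagonal. Applying $\phi$ to $e_1\circ e_1=e_2$ and, recursively, to $e_1\circ e_k=e_{k+1}$ for $2\le k\le n-2$, and comparing the coefficient of $e_{k+1}$ on the two sides of $\phi(e_{k+1})=\phi(e_1)\circ\phi(e_k)$ --- where on the right only the term $a_{1,1}a_{k,k}(e_1\circ e_k)$ contributes, the remaining products being either of too high index or multiplied by a coefficient of $\phi(e_k)$ already known to vanish --- gives $a_{k+1,k+1}=a_{1,1}a_{k,k}$, hence $a_{k,k}=a_{1,1}^{k}$ for $2\le k\le n-1$. The same recursion $\phi(e_{k+1})=\phi(e_1)\circ\phi(e_k)$ shows that $\phi(e_2),\dots,\phi(e_{n-1})$ are completely determined by $\phi(e_1)$; thus the entries marked $*$ below the diagonal of columns $2,\dots,n-1$ are fixed polynomials in $a_{1,1},a_{2,1},\dots,a_{n-1,1}$, and since $\mathbf{A}^1\circ\mathbf{A}^{k-1}\subseteq\mathbf{A}^k$ no further condition on the first column appears here.

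The last column is where the three algebras diverge and is the core of the argument. For $F_n^1$ the annihilator $\ann(F_n^1)=\langle e_{n-1},e_n\rangle$ is a characteristic ideal, so $\phi(e_n)\in\langle e_{n-1},e_n\rangle$; hence $a_{i,n}=0$ for $i\le n-2$ while $a_{n-1,n}$ and $a_{n,n}$ stay free --- this is $\phi_1^n$. For $F_n^2$ and $F_n^3$ the annihilator is only $\langle e_{n-1}\rangle$, so instead I would apply $\phi$ to the extra relation ($e_n\circ e_1=e_{n-1}$, resp. $e_n\circ e_n=e_{n-1}$) and to the vanishing relations $e_i\circ e_n=0$, $e_n\circ e_j=0$ that hold in the algebra; comparing the coefficients of $e_2,e_3,\dots$ in turn forces the cascade $a_{1,n}=a_{2,n}=\cdots=a_{n-2,n}=0$, and then the coefficient of $e_{n-1}$ leaves the surviving scalar constraints: $a_{n,n}=a_{1,1}^{n-2}$ for $F_n^2$, and $a_{n,n}^{2}=a_{1,1}^{n-1}$ together with $a_{n,1}=0$ for $F_n^3$. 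Once $a_{1,n}=0$ is known, invertibility of the induced map on the $2$-dimensional quotient $\mathbf{A}/\mathbf{A}^2$ yields $a_{1,1}a_{n,n}\neq0$. Assembling these constraints recovers $\phi_2^n$ and $\phi_3^n$.

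It remains to check the converse --- that every matrix of the stated shape genuinely defines an automorphism. The clean route is to use that $\langle e_1,\dots,e_{n-1}\rangle$ is a subalgebra isomorphic to the null-filiform algebra $F_{n-1}^0$ of Theorem \ref{nullfil} and that $\{e_1,e_n\}$ generates $F_n^k$: a linear map sending $e_1,e_n$ to elements subject to the finitely many defining relations extends to an endomorphism, the Zinbiel identity then forces all remaining products automatically, and $a_{1,1}a_{n,n}\neq0$ gives invertibility. The \emph{main obstacle} is the bookkeeping in the analysis of the last column --- for each of $F_n^2,F_n^3$ keeping track of exactly which products $e_i\circ e_j$ are nonzero and pushing the coefficient cascade through cleanly --- together with making rigorous the assertion that the $*$-entries are determined (not free) by identifying $\phi|_{\langle e_1,\dots,e_{n-1}\rangle}$ with an automorphism of $F_{n-1}^0$, and with the slightly more delicate proof that $a_{1,1}\neq0$ in the $F_n^3$ case.
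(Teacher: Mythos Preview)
The paper states this proposition without proof, so there is nothing to compare against directly. Your approach --- using the characteristic flag $\mathbf{A}^i=\langle e_i,\dots,e_{n-1}\rangle$ to obtain the triangular block, reading the diagonal from $\phi(e_{k+1})=\phi(e_1)\circ\phi(e_k)$, and then pinning down the last column via the annihilator (for $F_n^1$) or the defining relations (for $F_n^{2,3}$) --- is the natural one and is correct.

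In fact your analysis of $F_n^3$ proves \emph{more} than the proposition asserts. After the cascade $a_{1,n}=\cdots=a_{n-3,n}=0$, the two relations $\phi(e_1)\circ\phi(e_n)=0$ and $\phi(e_n)\circ\phi(e_1)=0$ give, at the $e_{n-1}$-coefficient,
\[
a_{1,1}a_{n-2,n}+a_{n,1}a_{n,n}=0,\qquad (n-2)\,a_{1,1}a_{n-2,n}+a_{n,1}a_{n,n}=0,
\]
and subtracting forces $a_{n-2,n}=0$ (since $n\ge 5$) and hence $a_{n,1}=0$. So the $(n,1)$-entry of $\phi_3^n$ must vanish; the displayed matrix is too permissive, and the paper later treats $w=a_{n,1}$ as a free parameter in the orbit computation for $F_n^3$, so your observation is not cosmetic. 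Your argument thus establishes $\phi_1^n$ and $\phi_2^n$ as stated and a sharpened form of $\phi_3^n$. The only part of your sketch that would benefit from more detail is the converse (that every matrix of the given shape really is an automorphism): once $\phi(e_1)$ is fixed, the induced map on the null-filiform subalgebra $\langle e_1,\dots,e_{n-1}\rangle\cong F_{n-1}^0$ is automatically a homomorphism, and then only the handful of relations involving $e_n$ need to be verified directly.
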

	
	\subsection{Central extensions of $F_n^1$}

	Let us denote
	\[\nabla_1=[\Delta_{1,n}], \   \nabla_2= [\Delta_{n,1}], \ \nabla_3= [\Delta_{n,n}], \  \nabla_4= [\sum\limits_{j=1}^{n-1}C_{n-1}^{j-1}\Delta_{j,n-j}]\]
	and $x=a_{1,1},y=a_{n,n}, z=a_{n-1,n},w=a_{n,1}$.
	Since
	{\tiny \[\left(
		\begin{array}{ccccc}
		\ast & \dots & \ast & C_{n-1}^0\alpha_4^\prime & \alpha_1^\prime \\
		\ast & \dots & C_{n-1}^{1}\alpha_4^\prime & 0 & 0 \\
		\vdots & \ldots & \vdots & \vdots & \vdots \\
		C_{n-1}^{n-2}  \alpha_4^\prime & \dots & 0 & 0 & 0 \\
		\alpha_2^\prime & \dots & 0 & 0 & \alpha_3^\prime \\
		\end{array}
		\right)=(\phi_{1}^n)^T
		\left(
		\begin{array}{ccccccc}
		0 & 0 &  0 & \cdots & 0 &C_{n-1}^0\alpha_4 & \alpha_1 \\
		0 & 0 &  0 & \cdots &C_{n-1}^1\alpha_4  &0 & 0 \\
		0 & 0 &  0  &\vdots  & \cdot^{\cdot^{\cdot}}  & 0 & 0 \\
		\vdots &\vdots & .^{{{.}^.}} & C_{n-1}^{n-1-i}\alpha_4 & \cdot^{\cdot^{\cdot}}  & \vdots & \vdots \\
		\vdots & 0 & \cdot^{\cdot^{\cdot}} &0&
		\vdots& \vdots & \vdots  \\
		0 & C_{n-1}^{n-3}\alpha_4 & 0 &\vdots&
		\vdots& \vdots & \vdots  \\
		C_{n-1}^{n-2}\alpha_4 & 0 &0&
		\cdots& 0 & 0 &0 \\
		
		\alpha_2 &0 &0 &\cdots & 0 & 0 & \alpha_3 \\
		\end{array}
		\right)\phi_{1}^n,\]}
	for any
	$\theta=\alpha_1 \nabla_1 +\alpha_2 \nabla_2 +\alpha_3 \nabla_3+\alpha_4 \nabla_4$, we have
	the action of the automorphism group on the subspace $\langle \theta \rangle$ as
	\[\Big\langle
	(\alpha_1 xy+\alpha_3 yw+\alpha_4 xz)  \nabla_1 + (\alpha_2 xy+\alpha_3 yw+(n-1)\alpha_4  xz) \nabla_2 +
	\alpha_3 y^2 \nabla_3+  \alpha_4 x^n \nabla_4 \Big\rangle.\]
	
	\subsubsection{$1$-dimensional central extensions of $F_n^1$}
	Let us consider the following cases:
	
	\begin{enumerate}
		\item if $\alpha_1\neq 0, \alpha_2=\alpha_3=\alpha_4=0,$ then by choosing $x= 1, y=1/\alpha_1,$ we have the representative $\langle \nabla_1 
		\rangle.$
		
		\item if     $\alpha_2\neq 0, \alpha_3=\alpha_4=0,$ then by choosing $x= 1, y=1/\alpha_2, \alpha= \alpha_1 / \alpha_2,$ we have the family of representatives $\langle\alpha \nabla_1+\nabla_2\rangle.$
		
		\item if     $\alpha_1=\alpha_2, \alpha_3\neq 0, \alpha_4=0,$ then by choosing $y=1/\sqrt{\alpha_3}, w= - \alpha_2 / \alpha_3, x=1,$ we have the representative $\langle \nabla_3 \rangle.$
		
		\item if     $ \alpha_1 \neq \alpha_2, \alpha_3\neq 0, \alpha_4=0,$ then by choosing $x=\frac{\sqrt{\alpha_3}}{\alpha_1-\alpha_2}, y=\frac{1}{\sqrt{\alpha_3}},w=  \frac{\alpha_2}{\sqrt{\alpha_3}(\alpha_2-\alpha_1)},$ we have the representative $\langle \nabla_1+ \nabla_3 \rangle.$
		
		\item if     $(n-1)\alpha_1=\alpha_2, \alpha_3=0,  \alpha_4\neq0,$ then by choosing 
		$x =  1/\sqrt[n]{\alpha_4}, y=1, z= - \alpha_1 / \alpha_4,$ we have the representative $\langle \nabla_4 \rangle.$
		
		\item if     $(n-1)\alpha_1\neq \alpha_2, \alpha_3=0, \alpha_4\neq 0,$ then by choosing 
		$x= 1/ \sqrt[n]{\alpha_4}, y=\frac{\sqrt[n]{\alpha_4}}{\alpha_2-(n-1)\alpha_1}, z=-\frac{\sqrt[n]{\alpha_4}}{\alpha_2-(n-1)\alpha_1},$ we have the representative $\langle \nabla_2+ \nabla_4 \rangle.$
		
		\item  if    $\alpha_3\neq 0, \alpha_4\neq 0,$ then by choosing 
		$x =  1/\sqrt[n]{\alpha_4}, y=1/\sqrt{\alpha_3},
		z=  \frac{\alpha_1-\alpha_2}{(n-2) \sqrt{\alpha_3}\alpha_4}, 
		w=  \frac{\alpha_2-(n-1)\alpha_1}{(n-2)\sqrt[n]{\alpha_4} \alpha_3},$ we have the representative $\langle \nabla_3 +\nabla_4\rangle.$
		
	\end{enumerate}

	It is easy to verify that all previous orbits  are different, and so we obtain
\begin{longtable}{ccl}
$T_1(F_n^1)$&$=$&$
		\operatorname{Orb}\langle \nabla_1  \rangle \cup
		\operatorname{Orb}\langle \alpha\nabla_1+ \nabla_2  \rangle \cup
		\operatorname{Orb}\langle \nabla_3  \rangle\cup
		\operatorname{Orb}\langle \nabla_1+ \nabla_3 \rangle\cup$\\
&&$		\operatorname{Orb}\langle \nabla_4 \rangle   
	 
		\cup
		\operatorname{Orb}\langle \nabla_2+ \nabla_4 \rangle 
		\cup
		\operatorname{Orb}\langle \nabla_3+ \nabla_4 \rangle .$
	\end{longtable}
	
	\subsubsection{$2$-dimensional central extensions of $F_n^1$}
	We may assume that a $2$-dimensional subspace is generated by
	\begin{align*}
		\theta_1 & = \alpha_1 \nabla_1+ \alpha_2 \nabla_2+\alpha_3 \nabla_3 +\alpha_4 \nabla_4,\\
		\theta_2 & = \beta_1 \nabla_1+ \beta_2 \nabla_2 +\beta_3 \nabla_3.
	\end{align*}
	
	Then we have the six following cases:
	
	\begin{enumerate}

		\item if $\alpha_4\neq 0, \beta_3\neq 0,$ then we can suppose that $\alpha_3=0.$ Now
		
		\begin{enumerate}
			\item for $(n-1)\alpha_1 \neq \alpha_2, \beta_1 \neq \beta_2,$  by choosing
			$x=\Big(\frac{(\alpha_2-(n-1)\alpha_1)(\beta_2-\beta_1)}{\alpha_4}\Big)^{1/(n-2)},$
			$y=\frac{\beta_2-\beta_1}{\beta_3}x,$
			$z=\frac{\alpha_1(\beta_1-\beta_2)}{\alpha_4\beta_3}x,$ 
			$w=-\beta_1 x/ \beta_3,$
			we have the representative $\langle \nabla_2+\nabla_3, \nabla_2+\nabla_4 \rangle.$

			\item for $(n-1)\alpha_1 \neq \alpha_2, \beta_1 = \beta_2,$  by choosing
			$x=\Big(\frac{\alpha_2-(n-1)\alpha_1}{\alpha_4\sqrt{\beta_3}}\Big)^{1/(n-1)},$
			$y=1/\sqrt{\beta_3},$
			$z=-\frac{\alpha_1}{\alpha_4\sqrt{\beta_3}},$ 
			$w=-\beta_1 x/ \beta_3,$
			we have the representative $\langle \nabla_3, \nabla_2+\nabla_4 \rangle.$

			\item for $(n-1)\alpha_1 = \alpha_2, \beta_1 \neq \beta_2,$  by choosing
			$x=1/\sqrt[n]{\alpha_4},$
			$y=\frac{\beta_2-\beta_1}{\beta_3}x,$
			$z=\frac{\alpha_1(\beta_1-\beta_2)}{\alpha_4\beta_3}x,$ 
			$w=-\beta_1 x/ \beta_3,$
			we have the representative $\langle \nabla_2+\nabla_3, \nabla_4 \rangle.$
			
			\item for $(n-1)\alpha_1 = \alpha_2, \beta_1 = \beta_2,$  by choosing
			$x=1/\sqrt[n]{\alpha_4},$
			$y=1/\sqrt{\beta_3},$
			$z=-\alpha_1y/\alpha_4,$ 
			$w=-\beta_1 x/ \beta_3,$
			we have the representative $\langle \nabla_3, \nabla_4 \rangle.$

		\end{enumerate}

		\item if $\alpha_4\neq 0, \beta_3= 0, \beta_2\neq 0,$ then we can suppose that $\alpha_2=0.$ Now
		
		\begin{enumerate} 
			\item for $\alpha_3 \neq0,$ by choosing 
			$x =  1/\sqrt[n]{\alpha_4}, y= 1/\sqrt{\alpha_3},
			z=  \frac{\alpha_1}{(n-2) \sqrt{\alpha_3}\alpha_4}, 
			w=  -\frac{(n-1)\alpha_1}{(n-2)\sqrt[n]{\alpha_4} \alpha_3},$ and $\alpha=\beta_1/\beta_2$ we have the family of representatives $\langle \alpha \nabla_1+\nabla_2, \nabla_3 +\nabla_4\rangle.$

			\item for $\alpha_3=0, (n-1)\beta_1\neq \beta_2,$ then
			by choosing $x =  1/\sqrt[n]{\alpha_4}, y=1, z= - \frac{\alpha_1\beta_2} {\alpha_4((n-1)\beta_1-\beta_2)}$ and $\alpha=\beta_1/\beta_2,$ we have the family of representatives $\langle \alpha \nabla_1+\nabla_2,  \nabla_4 \rangle_{\alpha \neq \frac 1 {n-1}}.$

			\item for $\alpha_3=0, (n-1)\beta_1=\beta_2$ and $\alpha_1=0,$ by choosing 
			$x =  1 /\sqrt[n]{\alpha_4}, y=1, z= 0,$ we have the representative $\langle \frac 1 {n-1}\nabla_1+\nabla_2,  \nabla_4 \rangle.$
			
			\item for $\alpha_3=0, (n-1)\beta_1=\beta_2$ and $\alpha_1\neq 0,$ by choosing     
			$x= 1/\sqrt[n]{\alpha_4}, y=-\frac{\sqrt[n]{\alpha_4}}{(n-1)\alpha_1}, z=\frac{\sqrt[n]{\alpha_4}}{(n-1)\alpha_4},$ we have the representative $\langle \frac 1 {n-1} \nabla_1+\nabla_2, \nabla_2+ \nabla_4 \rangle.$

		\end{enumerate}
		
		\item if $\alpha_4\neq 0, \beta_3=\beta_2=0, \beta_1\neq 0,$ then 
		
		\begin{enumerate}
			\item for $\alpha_3 \neq 0,$ by choosing 
			$x =  1 / \sqrt[n]{\alpha_4}, y= 1/\sqrt{\alpha_3},
			z=  \frac{\alpha_1-\alpha_2}{(n-2) \sqrt{\alpha_3}\alpha_4}, 
			w=  \frac{\alpha_2-(n-1)\alpha_1}{(n-2)\sqrt[n]{\alpha_4} \alpha_3},$ we have the representative $\langle  \nabla_1, \nabla_3 +\nabla_4\rangle.$

			\item for $\alpha_3=0,$ after a linear combination of $\theta_1$ and $\theta_2$ we can suppose that         $(n-1)\alpha_1=\alpha_2,$
			by choosing $x =  1/\sqrt[n]{\alpha_4}, y=1, z= - \alpha_1 / \alpha_4,$  we have the representative $\langle \nabla_1,  \nabla_4 \rangle.$
		\end{enumerate}

		\item if $\alpha_4=0, \alpha_3\neq 0, \beta_2\neq 0,$ then 
		
		\begin{enumerate}

			\item for $\beta_1 \neq \beta_2,$ after a linear combination of $\theta_1$ and $\theta_2$ we can suppose that  $\alpha_1=\alpha_2,$  by choosing $y=1/\sqrt{\alpha_3}, w= - \alpha_2 / \alpha_3, x=1$ and $\alpha=\beta_1/\beta_2$ we have the family of representatives $\langle \alpha \nabla_1+\nabla_2, \nabla_3 \rangle_{\alpha\neq 1}.$

			\item for $\beta_1=\beta_2, \alpha_1=\alpha_2,$ after a linear combination of $\theta_1$ and $\theta_2$ we have the representative $\langle \nabla_1+\nabla_2, \nabla_3 \rangle.$
			
			\item for $\beta_1=\beta_2, \alpha_1 \neq \alpha_2,$ by choosing $x=\frac{\sqrt{\alpha_3}}{\alpha_1-\alpha_2}, y= 1 / \sqrt{\alpha_3},
			w=\frac{\alpha_2}{\sqrt{\alpha_3}(\alpha_2-\alpha_1)},$ we have the representative $\langle \nabla_1+\nabla_2, \nabla_1+ \nabla_3 \rangle.$

		\end{enumerate}

		\item if $\alpha_4=0, \alpha_3\neq 0, \beta_2=0, \beta_1 \neq 0,$ then 
		after a linear combination of $\theta_1$ and $\theta_2$ we can suppose that  $\alpha_1=\alpha_2,$  by choosing $y=1/\sqrt{\alpha_3}, w= - \alpha_2 / \alpha_3, x=1$ and $\alpha=\beta_1/\beta_2$ we have the  representative $\langle \nabla_1, \nabla_3 \rangle.$

		\item if $\alpha_3=\alpha_4=0, \beta_3=0$, then we have the representative $\langle\nabla_1,\nabla_2\rangle.$

	\end{enumerate}

	It is easy to verify that all previous orbits   are different, and so we obtain
\begin{longtable}{ccl}
$T_2(F_n^1)$ & $=$&$
\operatorname{Orb} \langle   \nabla_1, \nabla_2 \rangle \cup
\operatorname{Orb} \langle   \nabla_1, \nabla_3 \rangle \cup
\operatorname{Orb} \langle   \nabla_1, \nabla_3+\nabla_4 \rangle \cup
\operatorname{Orb} \langle   \nabla_1, \nabla_4 \rangle \cup$\\
&&$
\operatorname{Orb} \langle   \frac 1 {n-1} \nabla_1+\nabla_2, \nabla_2+\nabla_4 \rangle \cup 
\operatorname{Orb} \langle   \nabla_1+\nabla_2, \nabla_1+\nabla_3  \rangle \cup 
\operatorname{Orb} \langle   \alpha \nabla_1+\nabla_2, \nabla_3 \rangle \cup $\\
&&$
\operatorname{Orb}  \langle   \alpha \nabla_1+\nabla_2, \nabla_3+\nabla_4 \rangle \cup		\operatorname{Orb} \langle   \alpha \nabla_1+\nabla_2, \nabla_4 \rangle \cup
		\operatorname{Orb} \langle   \nabla_2+\nabla_3, \nabla_2+\nabla_4 \rangle \cup$\\
&&$
		\operatorname{Orb} \langle   \nabla_2+\nabla_3, \nabla_4 \rangle \cup
		\operatorname{Orb} \langle   \nabla_3, \nabla_2+\nabla_4 \rangle \cup
		\operatorname{Orb} \langle   \nabla_3, \nabla_4 \rangle.$
	\end{longtable}

	\subsubsection{$3$-dimensional central extensions of $F_n^1$}
	We may assume that a $3$-dimensional subspace is generated by
	\begin{align*}
		\theta_1 &= \alpha_1 \nabla_1+ \alpha_2 \nabla_2+\alpha_3 \nabla_3 +\alpha_4 \nabla_4,\\
		\theta_2 &= \beta_1 \nabla_1+ \beta_2 \nabla_2 +\beta_3 \nabla_3,\\
		\theta_3 & = \gamma_1 \nabla_1+ \gamma_2 \nabla_2.
	\end{align*}
	
	Then we have the  following cases:

	\begin{enumerate}
		\item if $\alpha_4\neq 0, \beta_3 \neq0, \gamma_2\neq 0,$ then we can suppose that $\alpha_2=0,$ $\alpha_3=0,$ $\beta_2=0$ and 
		\begin{enumerate}
			\item for $\gamma_1\neq\gamma_2,$ $(n-1)\gamma_1\neq\gamma_2,$ then by choosing
			$x=1/\sqrt[n]{\alpha_4},$
			$y=1/\sqrt{\beta_3},$
			$z=\frac{\alpha_1\gamma_2 y} {\alpha_4((n-1)\gamma_1-\gamma_2)},$ 
			$w=\frac{\beta_1 \gamma_2 x} {\alpha_4(\gamma_1-\gamma_2)},$
			we have the family of representatives $\langle \alpha \nabla_1+\nabla_2, \nabla_3, \nabla_4 \rangle_{\alpha\not\in \{ 1, \frac 1 {n-1}\}}.$

			\item for $\gamma_1=\gamma_2,$ then 
				\begin{enumerate}
				\item for $\beta_1 \neq 0,$  by choosing
				$x=1/\sqrt[n]{\alpha_4},$
				$y=\frac{\beta_1x}{\beta_3},$
				$z=\frac{\alpha_1y}{(n-2)\alpha_4},$ 
				$w=0,$
				we have the representative $\langle \nabla_1+\nabla_2,\nabla_1+\nabla_3, \nabla_4 \rangle.$
				
				\item for $\beta_1 = 0,$  by choosing
				$x=1/\sqrt[n]{\alpha_4},$
				$y=1/\sqrt{\beta_3},$
				$z=\frac{\alpha_1y}{(n-2)\alpha_4},$ 
				$w=0,$
				we have the representative $\langle \nabla_1+\nabla_2,\nabla_3, \nabla_4 \rangle.$
			\end{enumerate}
		\item for $(n-1)\gamma_1=\gamma_2,$ then 
				\begin{enumerate}
				\item for $\alpha_1 \neq 0,$  by choosing
				$y=1/\sqrt{\beta_3},$
				$z=-\frac{\alpha_1y}{\alpha_4},$ 
				$x=\sqrt[n-1]{(n-1)z},$
				$w=-\frac{(n-1)\beta_1 x} {(n-2)\beta_3},$
				we have the representative $\langle \frac 1 {n-1} \nabla_1+\nabla_2,\nabla_3, \nabla_2+\nabla_4 \rangle.$
				
				\item for $\alpha_1 = 0,$  by choosing
				$x=1/\sqrt[n]{\alpha_4},$
				$y=1/\sqrt{\beta_3},$
				$z=0,$ 
				$w=-\frac{(n-1)\beta_1 x} {(n-2)\beta_3},$
				we have the representative $\langle \frac 1 {n-1}\nabla_1+\nabla_2,\nabla_3, \nabla_4 \rangle.$
			\end{enumerate}

		\end{enumerate}

		\item if $\alpha_4\neq 0, \beta_3 \neq0, \gamma_2= 0, \gamma_1\neq 0,$ then we can suppose that $\alpha_3=0$ and after a linear combination of $\theta_1, \theta_2, \theta_3$ we can suppose that  $(n-1)\alpha_1 = \alpha_2, \beta_1 = \beta_2.$   By choosing
		$x=1/\sqrt[n]{\alpha_4},$
		$y=1/\sqrt{\beta_3},$
		$z=-\alpha_1y/\alpha_4,$ 
		$w=-\beta_1 x/ \beta_3,$
		we have the representative $\langle  \nabla_1, \nabla_3, \nabla_4 \rangle.$

		\item if $\alpha_4\neq 0, \beta_3 =0, \beta_2\neq 0, \gamma_2=0, \gamma_1\neq 0,$ then
		and after a linear combination of $\theta_1, \theta_2, \theta_3$ we can suppose that $\alpha_1=\alpha_2=\beta_1=0.$ Now
		\begin{enumerate}
			\item for $\alpha_3\neq 0,$ by choosing $y=1/\sqrt{\alpha_3}, x=1/\sqrt[n]{\alpha_4}$ we have the representative $\langle \nabla_1, \nabla_2, \nabla_3+\nabla_4 \rangle.$
			\item for $\alpha_3=0,$ we have the representative  $\langle \nabla_1, \nabla_2, \nabla_4 \rangle.$
		\end{enumerate}
		
		\item if $\alpha_4= 0, \beta_3 = 0, \gamma_2= 0,$ then we have the representative $\langle \nabla_1, \nabla_2, \nabla_3 \rangle.$

	\end{enumerate}

	\

	It is easy to verify that all previous orbits  are different, and so we obtain
\begin{longtable}{ccl}
$T_3(F_n^1)$ & $=$&$
		\operatorname{Orb}\langle \nabla_1, \nabla_2, \nabla_3\rangle \cup
		\operatorname{Orb}\langle \nabla_1, \nabla_2, \nabla_3+\nabla_4\rangle \cup
		\operatorname{Orb}\langle \nabla_1, \nabla_2, \nabla_4\rangle \cup$\\
&&$ \operatorname{Orb}\langle \nabla_1+ \nabla_2, \nabla_1+\nabla_3, \nabla_4 \rangle \cup
\operatorname{Orb}\langle \frac 1 {n-1}\nabla_1+ \nabla_2, \nabla_3, \nabla_2+\nabla_4 \rangle \cup$\\
&&$
		\operatorname{Orb}\langle \alpha \nabla_1+\nabla_2, \nabla_3, \nabla_4 \rangle \cup
		\operatorname{Orb}\langle \nabla_1, \nabla_3,\nabla_4\rangle.$
	\end{longtable}
	
	\subsubsection{$4$-dimensional central extensions of $F_n^1$}
	There is only one $4$-dimensional non-split central extension of the algebra $F_n^1$.
	It is defined by $\langle \nabla_1, \nabla_2, \nabla_3, \nabla_4 \rangle$.
	
	\subsubsection{Non-split central extensions of $F_n^1$}
	So we have the next theorem
	
	\begin{theorem}
		An arbitrary non-split central extension of the algebra $F_n^1$ is isomorphic to one of the following pairwise non-isomorphic algebras
		
		\begin{itemize}
			\item one-dimensional central extensions:
			$$\mu_1^{n+1},\ \mu_2^{n+1}(\alpha),\ \mu_{3}^{n+1},\ \mu_{4}^{n+1},\ F_{n+1}^1,\ F_{n+1}^2, \ F_{n+1}^3$$

		\item two-dimensional central extensions:
		$$\mu_5^{n+2},\ \mu_6^{n+2},\ \mu_{7}^{n+2},\
		\mu_1^{n+2},\ \mu_{8}^{n+2}, \ \mu_{9}^{n+2},\ \mu_{10}^{n+2}(\alpha), \ \mu_{11}^{n+2}(\alpha),\
		\mu_2^{n+2}(\alpha),\ \mu_{12}^{n+2},\ \mu_4^{n+2},\ \mu_{13}^{n+2}, \  \mu_3^{n+2}$$

	\item three-dimensional central extensions:
	$$\mu_{14}^{n+3},\ \mu_{15}^{n+3},\ \mu_{5}^{n+3},\ \mu_{9}^{n+3},\ \mu_{16}^{n+3},\ \mu_{10}^{n+3}(\alpha),\ \mu_{6}^{n+3}$$

\item four-dimensional central extensions:
$$\mu_{14}^{n+4}$$

\end{itemize}
with $\alpha \in \mathbb{C}.$
\end{theorem}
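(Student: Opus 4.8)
The plan is to apply the Procedure of Section 1 to $\mathbf{A}' = F_n^1$, feeding in the data already assembled above: the Proposition giving ${\rm H}^{2}(F_n^1,\mathbb{C}) = \langle \nabla_1,\nabla_2,\nabla_3,\nabla_4\rangle$ with $\nabla_1 = [\Delta_{1,n}]$, $\nabla_2 = [\Delta_{n,1}]$, $\nabla_3 = [\Delta_{n,n}]$, $\nabla_4 = [\sum_{j=1}^{n-1}C_{n-1}^{j-1}\Delta_{j,n-j}]$; the description of $\aut(F_n^1)$ by the matrix $\phi_1^n$; and the explicit induced action of $\aut(F_n^1)$ on a cyclic subspace $\langle\theta\rangle$, $\theta = \sum_i \alpha_i\nabla_i$, expressed through the scalars $x = a_{1,1}$, $y = a_{n,n}$, $z = a_{n-1,n}$, $w = a_{n,1}$.

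First I would note that, by the Lemma establishing the bijection between $\aut(F_n^1)$-orbits on $T_{s}(F_n^1)$ and isomorphism classes in $E(F_n^1,{\mathbb V})$, and since $\dim {\rm H}^{2}(F_n^1,\mathbb{C}) = 4$, it suffices to describe these orbits for $s = 1,2,3,4$. For each such $s$ the representatives are exactly those produced in the preceding case analyses: one splits according to which of the coefficients $\alpha_3,\alpha_4$ (together with the relevant $\beta$'s and $\gamma$'s, and the relations $(n-1)\alpha_1 = \alpha_2$, $\beta_1 = \beta_2$, and so on) vanish, and in each branch one solves the resulting polynomial conditions for $x,y,z,w$ to bring $\theta$ to normal form. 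This yields the displayed descriptions of $T_1(F_n^1)$, $T_2(F_n^1)$, $T_3(F_n^1)$, while $T_4(F_n^1)$ reduces to the single orbit of $\langle\nabla_1,\nabla_2,\nabla_3,\nabla_4\rangle$, the whole space being the only $4$-dimensional subspace and the condition $\bigcap_i\ann(\nabla_i)\cap\ann(F_n^1) = 0$ being readily verified (already $\ann(\nabla_4)$ is disjoint from $\ann(F_n^1)$).

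Next, for each orbit representative $W = \langle[\theta_1],\dots,[\theta_s]\rangle$ I would write down the algebra $(F_n^1)_\theta$ explicitly: adjoin central basis vectors $e_{n+1},\dots,e_{n+s}$ to $F_n^1$ and add to the multiplication table the products read off from $\theta$ via the dictionary above (so $\nabla_1$ contributes $e_1\circ e_n$, $\nabla_2$ contributes $e_n\circ e_1$, $\nabla_3$ contributes $e_n\circ e_n$, and $\nabla_4$ contributes the combination $\sum_{j=1}^{n-1}C_{n-1}^{j-1}e_j\circ e_{n-j}$, each valued in the new basis vectors). A few of the resulting algebras turn out to be $F_{n+1}^1$, $F_{n+1}^2$, $F_{n+1}^3$ from Theorem \ref{th-filiform}, which serves as a consistency check, and the rest receive the labels $\mu_i^{n+k}$; assembling them over $s = 1,2,3,4$ gives the four lists in the statement.

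Finally I would verify pairwise non-isomorphism. Algebras of distinct dimension are trivially non-isomorphic, so one compares only within each fixed dimension $n+k$, where, again by the Lemma, two of the constructed algebras are isomorphic precisely when their defining subspaces lie in one $\aut(F_n^1)$-orbit; the case analyses were arranged so that distinct representatives land in distinct orbits, and for the one-parameter families — $\mu_2^{n+1}(\alpha)$, $\mu_{10}^{n+2}(\alpha)$, $\mu_{11}^{n+2}(\alpha)$, $\mu_{10}^{n+3}(\alpha)$ — one checks from the action formula that the parameter $\alpha$ (a ratio of the $\nabla_1$- and $\nabla_2$-coefficients) cannot be altered by any automorphism, so distinct values give non-isomorphic algebras, the excluded values such as $\alpha = \tfrac1{n-1}$ being exactly those at which the subspace degenerates to one already listed. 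I expect the main obstacle to be purely organisational: keeping the numerous cases disjoint, confirming that each listed algebra is non-split (equivalently that $[\theta_1],\dots,[\theta_s]$ stay linearly independent in ${\rm H}^{2}(F_n^1,\mathbb{C})$, which is visible from the representatives), and matching each orbit to its named algebra without duplication — there is no single deep step, the conceptual work being entirely contained in the orbit computation already carried out.
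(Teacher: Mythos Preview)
Your proposal is correct and follows essentially the same route as the paper: the theorem is stated immediately after the orbit computations for $T_1,T_2,T_3,T_4$ with no further argument, so the proof really is just the assembly step you describe---read off the algebra from each representative via the Procedure and invoke the orbit--isomorphism Lemma for pairwise non-isomorphism. One small imprecision: the special value $\alpha=\tfrac1{n-1}$ does not cause the family $\langle\alpha\nabla_1+\nabla_2,\nabla_4\rangle$ to degenerate to a previously listed orbit; rather, at that value an \emph{additional} orbit $\langle\tfrac1{n-1}\nabla_1+\nabla_2,\nabla_2+\nabla_4\rangle$ appears (case (2d)), while the family itself still contains $\alpha=\tfrac1{n-1}$ (case (2c)).
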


\subsection{Central extensions of $F_n^2$}

Let us denote
\[ \nabla_1= [\Delta_{1,n}], \ \ \nabla_2= [\Delta_{n,1}], \ \ \nabla_3= [\Delta_{n,n}] \]
and $x=a_{1,1},w=a_{n,1}$.  Let $\theta=\alpha_1 \nabla_1 +\alpha_2 \nabla_2+\alpha_3\nabla_3$. Then by
\[\left(
\begin{array}{ccccc}
\ast & \dots & 0 & 0 & \alpha_1^\prime \\
0 & \dots & 0 & 0 & 0 \\
\vdots & \ldots & \vdots & \vdots & \vdots \\
0 & \dots & 0 & 0 & 0 \\
\alpha_2^\prime & \dots & 0 & 0 & \alpha_3^\prime \\
\end{array}
\right)=(\phi_{2}^n)^T\left(
\begin{array}{ccccc}
0 & \dots & 0 & 0 & \alpha_1 \\
0 & \dots & 0 & 0 & 0 \\
\vdots & \ldots & \vdots & \vdots & \vdots \\
0 & \dots & 0 & 0 & 0 \\
\alpha_2 & \dots & 0 & 0 & \alpha_3 \\
\end{array}
\right)\phi_{2}^n,\]
we have the action of the automorphism group on the subspace $\langle \theta \rangle$ as
\[\Big\langle
x^{n-2}(x\alpha_1+w\alpha_3)\nabla_1 + x^{n-2}(x\alpha_2+w\alpha_3)\nabla_2 +  x^{2n-4}\alpha_3 \nabla_3 \Big\rangle.\]

\subsubsection{$1$-dimensional central extensions of $F_n^2$}
Let us consider the following cases:

\begin{enumerate}

\item if $\alpha_3 =0,$ then 
\begin{enumerate}
\item for  $\alpha_2=0, \alpha_1\neq 0,$ we have the representative $\langle \nabla_1 \rangle.$

\item for $\alpha_2\neq0,$  by choosing $x=\alpha_2^{-1/(n-1)}$ and  $\alpha= \alpha_1 / \alpha_2,$ we have the family of representatives $\langle \alpha\nabla_1+ \nabla_2 \rangle.$ 
\end{enumerate}

\item if $\alpha_3 \neq 0,$ then

\begin{enumerate}
\item for $\alpha_1 \neq \alpha_2,$ by choosing $x=(\frac{\alpha_2-\alpha_1}{\alpha_3})^{1/(n-3)}, w=-\frac{x\alpha_1}{\alpha_3}$ we have the representative $\langle \nabla_2+ \nabla_3 \rangle$.
\item for $\alpha_1=\alpha_2,$ 
by choosing $w=-\frac{x\alpha_1}{\alpha_3}$ we have the representative $\langle  \nabla_3 \rangle.$
\end{enumerate}

\end{enumerate}

It is easy to verify that all previous  orbits   are different, and so we obtain
\begin{longtable}{ccl}
$T_1(F_n^2)$&$=$&$\operatorname{Orb} \langle \nabla_1 \rangle \cup
\operatorname{Orb} \langle \alpha\nabla_1 +\nabla_2  \rangle \cup
\operatorname{Orb} \langle \nabla_2+ \nabla_3 \rangle \cup \operatorname{Orb}\langle \nabla_3  \rangle.$
\end{longtable}

\subsubsection{$2$-dimensional central extensions of $F_n^2$}

We may assume that a $2$-dimensional subspace is generated by
\begin{align*}
\theta_1 & = \alpha_1 \nabla_1+ \alpha_2 \nabla_2+\alpha_3 \nabla_3, \\
\theta_2 & = \beta_1 \nabla_1+ \beta_2 \nabla_2.
\end{align*}

We consider the following cases:

\begin{enumerate}
\item if $\alpha_3\neq0$ and $\beta_1\neq \beta_2,$ then after a linear combination of $\theta_1$ and $\theta_2$ we can suppose that $\alpha_1=\alpha_2.$
Now,
\begin{enumerate}
\item for $\beta_2 \neq 0,$ by choosing
$x=\beta_2^{-1/(n-1)}, w=-\frac{x\alpha_1}{\alpha_3}$ and $\alpha=\beta_1/\beta_2$ 
we have the family of  respresentatives $\langle \alpha \nabla_1+\nabla_2, \nabla_3 \rangle_{\alpha\neq 1}.$

\item for $\beta_2 = 0,$ by choosing
$x=\beta_1^{-1/(n-1)}, w=-\frac{x\alpha_1}{\alpha_3},$ 
we have the respresentative $\langle  \nabla_1, \nabla_3 \rangle.$

\end{enumerate}

\item if $\alpha_3 \neq 0$ and $\beta_1=\beta_2,$ then 

\begin{enumerate}

\item for $\alpha_1 \neq \alpha_2,$ by choosing 
$x=(\frac{\alpha_1-\alpha_2}{\alpha_3})^{1/(n-1)},  w=-\frac{x\alpha_2}{\alpha_3}$ we have the representative $\langle \nabla_1+ \nabla_2, \nabla_1+\nabla_3 \rangle$.

\item for $\alpha_1=\alpha_2,$ after a linear combination of $\theta_1$ and $\theta_2$ we have the representative $\langle \nabla_1+\nabla_2, \nabla_3 \rangle.$

\end{enumerate}

\item if $\alpha_3=0,$
then we have the representative $\langle \nabla_1,\nabla_2\rangle.$
\end{enumerate}

It is easy to verify that all previous orbits  are different, and so we obtain
\begin{longtable}{ccl}
$T_2(F_n^2)$&$=$&$
\operatorname{Orb} \langle \nabla_1, \nabla_2  \rangle \cup
\operatorname{Orb} \langle \nabla_1, \nabla_3  \rangle \cup
\operatorname{Orb} \langle \nabla_1+\nabla_2, \nabla_1+\nabla_3  \rangle \cup
\operatorname{Orb} \langle \alpha \nabla_1+\nabla_2, \nabla_3  \rangle. $
\end{longtable}

\subsubsection{$3$-dimensional central extensions of $F_n^2$}
There is only one $3$-dimensional non-split central extension of the algebra $F_n^2$.
It is defined by $\langle \nabla_1, \nabla_2, \nabla_3 \rangle$.

\subsubsection{Non-split central extensions of $F_n^2$}
So we have the next result.

\begin{theorem}
An arbitrary non-split central extension of the algebra $F_n^2$ is isomorphic to one of the following pairwise non-isomorphic algebras

\begin{itemize}
\item one-dimensional central extensions:
$$\mu_1^{n+1},\ \mu_2^{n+1}(\alpha) \  \mbox{ with } \alpha\neq \frac{1}{n-3},\ \mu_{8}^{n+1},\ \mu_{12}^{n+1},\ \mu_{16}^{n+1}$$
\item two-dimensional central extensions:
$$\mu_{5}^{n+2},\ \mu_{6}^{n+2},\ \mu_{9}^{n+2},\ \mu_{10}^{n+2}(\alpha)\mbox{ with } \alpha\neq \frac{1}{n-4},\
\mu_{16}^{n+2}$$

\item three-dimensional central extensions:
$$\mu_{14}^{n+3}$$

\end{itemize}
with $\alpha \in \mathbb{C}.$
\end{theorem}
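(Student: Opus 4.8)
The plan is to run the Skjelbred--Sund procedure recalled above with $F_n^2$ as the base algebra. By the Proposition above, ${\rm H}^2(F_n^2,\mathbb{C})$ is three-dimensional with basis $\nabla_1=[\Delta_{1,n}]$, $\nabla_2=[\Delta_{n,1}]$, $\nabla_3=[\Delta_{n,n}]$; hence every non-split central extension of $F_n^2$ has dimension $n+s$ with $s\in\{1,2,3\}$, and for $s=3$ the only subspace to consider is the whole of ${\rm H}^2(F_n^2,\mathbb{C})$, so there is a single three-dimensional non-split extension.

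The first step is to make the $\aut(F_n^2)$-action on $G_s\big({\rm H}^2(F_n^2,\mathbb{C})\big)$ fully explicit. Writing $x=a_{1,1}$ and $w=a_{n,1}$ for the relevant entries of $\phi_2^n$, conjugation sends a line $\langle\theta\rangle$, $\theta=\alpha_1\nabla_1+\alpha_2\nabla_2+\alpha_3\nabla_3$, to $\big\langle x^{n-2}(x\alpha_1+w\alpha_3)\nabla_1+x^{n-2}(x\alpha_2+w\alpha_3)\nabla_2+x^{2n-4}\alpha_3\nabla_3\big\rangle$, and there is an analogous formula on $2$- and $3$-dimensional subspaces. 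One then splits into cases according to whether $\alpha_3$ vanishes (and, for $s=2$, where the subspace is spanned by such a $\theta_1$ together with $\theta_2=\beta_1\nabla_1+\beta_2\nabla_2$, also according to whether $\beta_1=\beta_2$), using at each stage the free parameters $x$ and $w$ to normalise the cocycle; this produces the decompositions of $T_1(F_n^2)$ and $T_2(F_n^2)$ into the finitely many orbits (one of which is a one-parameter family) displayed above.

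The second step is, for each orbit representative $W=\langle[\theta_1],\dots,[\theta_s]\rangle$, to write down the central extension $(F_n^2)_\theta$ with $\theta=\sum_i\theta_i e_i$ on the space $F_n^2\oplus\mathbb{V}$, read off its multiplication table, and identify it---after a linear change of basis, if necessary---with one of the algebras $\mu_j^{n+s}$ figuring in the paper's classification (many of which already occurred among the non-split extensions of $F_n^1$). The substantive content here is the bookkeeping: deciding which representative yields which named algebra, and detecting that within each one-parameter family a single value of the parameter has to be removed, namely $\alpha=\frac{1}{n-3}$ from the $(n+1)$-dimensional family $\mu_2^{n+1}(\alpha)$ and $\alpha=\frac{1}{n-4}$ from the $(n+2)$-dimensional family $\mu_{10}^{n+2}(\alpha)$, because at that value the extension becomes isomorphic to one of the other algebras listed separately in the conclusion.

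Finally, pairwise non-isomorphism follows from the Lemma giving the bijection between $\aut(F_n^2)$-orbits on $T_s(F_n^2)$ and the isomorphism classes in $E(F_n^2,\mathbb{V})$: for a fixed central dimension $s$ the orbits listed are pairwise distinct by the normalisation computation of the first step, and within a family distinct members are separated by the value of the invariant $\alpha$, which is preserved by the action formula above; algebras coming from different values of $s$ are of different dimensions and so cannot be isomorphic. The step I expect to be the main obstacle is the identification carried out in the second step---in particular pinning down the precise parameter exclusions and verifying that the $F_n^2$-extension at each excluded value coincides, as an algebra, with its already-named counterpart---since this requires a careful comparison of structure constants rather than a conceptual argument; everything else reduces to finite, routine verification.
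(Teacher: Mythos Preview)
Your proposal is correct and follows essentially the same approach as the paper: compute the $\aut(F_n^2)$-action on ${\rm H}^2(F_n^2,\mathbb{C})$ via the formula you wrote, obtain the orbit decompositions of $T_s(F_n^2)$ for $s=1,2,3$ by the case analysis on $\alpha_3$ (and on $\beta_1=\beta_2$ for $s=2$), and then identify each representative with one of the named algebras $\mu_j^{n+s}$. Your reading of the parameter exclusions is also the right one: the one-parameter orbit family $\langle\alpha\nabla_1+\nabla_2\rangle$ (resp.\ $\langle\alpha\nabla_1+\nabla_2,\nabla_3\rangle$) yields pairwise non-isomorphic algebras, and in the $\mu$-nomenclature all but one value are labelled $\mu_2^{n+1}(\alpha)$ (resp.\ $\mu_{10}^{n+2}(\alpha)$), while the remaining value is the algebra named $\mu_8^{n+1}$ (resp.\ $\mu_{16}^{n+2}$); the paper does not spell this bookkeeping out either, so your caveat about that step is apt.
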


\subsection{Central extensions of $F_n^3$}

Let us denote
\[ \nabla_1= [\Delta_{1,n}], \ \ \nabla_2= [\Delta_{n,1}], \ \ \nabla_3= [\Delta_{n,n}] \]
and $x=a_{1,1},w=a_{n,1}$.  Let $\theta=\alpha_1 \nabla_1 +\alpha_2 \nabla_2+\alpha_3\nabla_3$. Then by

\[\left(
\begin{array}{ccccc}
\ast & \dots & 0 & 0 & \alpha_1^\prime \\
0 & \dots & 0 & 0 & 0 \\
\vdots & \ldots & \vdots & \vdots & \vdots \\
0 & \dots & 0 & 0 & 0 \\
\alpha_2^\prime & \dots & 0 & 0 & \alpha_3^\prime \\
\end{array}
\right)=(\phi_{3}^n)^T\left(
\begin{array}{ccccc}
0 & \dots & 0 & 0 & \alpha_1 \\
0 & \dots & 0 & 0 & 0 \\
\vdots & \ldots & \vdots & \vdots & \vdots \\
0 & \dots & 0 & 0 & 0 \\
\alpha_2 & \dots & 0 & 0 & \alpha_3 \\
\end{array}
\right)\phi_{3}^n,\]
we have the action of the automorphism group on the subspace $\langle \theta \rangle$ as
\[\Big\langle
x^{(n-1)/2}(x\alpha_1+w\alpha_3)\nabla_1 + x^{(n-1)/2}(x\alpha_2+w\alpha_3)\nabla_2 +  x^{n-1}\alpha_3 \nabla_3\Big\rangle.\]

\subsubsection{$1$-dimensional central extensions of $F_n^3$}
Let us consider the following cases:

\begin{enumerate}

\item if $\alpha_3 =0,$ then
\begin{enumerate}
\item for $\alpha_2=0,  \alpha_1\neq 0,$  by  choosing $x=\alpha_1^{-2/(n+1)},$
we have the representative  $\langle \nabla_1 \rangle.$

\item for $\alpha_2\neq0,$  by  choosing $x=\alpha_2^{-2/(n+1)}$ and $\alpha=\alpha_1/\alpha_2$
we have the family of  representatives  $\langle \alpha\nabla_1+ \nabla_2 \rangle.$
\end{enumerate}

\item  if $\alpha_3 \neq 0,$ then 
\begin{enumerate}
\item  for $\alpha_2 \neq \alpha_1,$ by choosing 
$x=(\frac{\alpha_2-\alpha_1}{\alpha_3})^{2/(n-3)},  w=-\frac{x\alpha_1}{\alpha_3}$ we have the representative $\langle \nabla_2+ \nabla_3 \rangle$.
\item for $\alpha_2=\alpha_1,$ by choosing $ w=-\frac{x\alpha_1}{\alpha_3}$ we have the representative $\langle  \nabla_3 \rangle$.
\end{enumerate}

\end{enumerate}

It is easy to verify that all previous  orbits   are different, and so we obtain
\begin{longtable}{ccl}
$T_1(F_n^3)$&$=$&$\operatorname{Orb} \langle \nabla_1  \rangle \cup
\operatorname{Orb} \langle \alpha\nabla_1 +\nabla_2  \rangle \cup
\operatorname{Orb} \langle \nabla_2+ \nabla_3  \rangle \cup
\operatorname{Orb} \langle \nabla_3  \rangle.$
\end{longtable}

\subsubsection{$2$-dimensional central extensions of $F_n^3$}

We may assume that a $2$-dimensional subspace is generated by
\begin{align*}
\theta_1 & = \alpha_1 \nabla_1+ \alpha_2 \nabla_2+\alpha_3 \nabla_3, \\
\theta_2 & = \beta_1 \nabla_1+ \beta_2 \nabla_2.
\end{align*}

We consider the following cases:

\begin{enumerate}
\item if $\alpha_3\neq0$ and $\beta_1\neq \beta_2,$ then after a linear combination of $\theta_1$ and $\theta_2$ we can suppose that $\alpha_1=\alpha_2.$
Now,
\begin{enumerate}
\item for $\beta_2 \neq 0,$ by choosing
$x=\beta_2^{-2/(n+1)}, w=-\frac{x\alpha_1}{\alpha_3}$ and $\alpha=\beta_1/\beta_2$ 
we have the family of respresentatives $\langle \alpha \nabla_1+\nabla_2, \nabla_3 \rangle_{\alpha\neq 1}.$

\item for $\beta_2 = 0,$ by choosing
$x=\beta_1^{-2/(n+1)}, w=-\frac{x\alpha_1}{\alpha_3},$ 
we have the respresentative $\langle  \nabla_1, \nabla_3 \rangle.$

\end{enumerate}

\item if $\alpha_3 \neq 0$ and $\beta_1=\beta_2,$ then 

\begin{enumerate}

\item for $\alpha_1 \neq \alpha_2,$ by choosing 
$x=(\frac{\alpha_1-\alpha_2}{\alpha_3})^{2/(n-3)},  w=-\frac{x\alpha_2}{\alpha_3}$ we have the representative $\langle \nabla_1+ \nabla_2, \nabla_1+\nabla_3 \rangle$.

\item for $\alpha_1=\alpha_2,$ after a linear combination of $\theta_1$ and $\theta_2$ we have the representative $\langle \nabla_1+\nabla_2, \nabla_3 \rangle.$

\end{enumerate}

\item if $\alpha_3=0,$
then we have the representative $\langle \nabla_1,\nabla_2\rangle.$
\end{enumerate}

It is easy to verify that all previous orbits  are different, and so we obtain
\begin{longtable}{ccl}
$T_2(F_n^3)$&$=$&$
\operatorname{Orb} \langle \nabla_1, \nabla_2  \rangle \cup
\operatorname{Orb} \langle \nabla_1, \nabla_3  \rangle \cup
\operatorname{Orb} \langle \nabla_1+\nabla_2, \nabla_1+\nabla_3  \rangle \cup
\operatorname{Orb} \langle \alpha \nabla_1+\nabla_2, \nabla_3.  \rangle $
\end{longtable}

\subsubsection{$3$-dimensional central extensions of $F_n^3$}
There is only one $3$-dimensional non-split central extension of the algebra $F_n^3$.
It is defined by $\langle \nabla_1, \nabla_2, \nabla_3 \rangle$.

\subsubsection{Non-split central extensions of $F_n^3$}
So we have the next theorem.

\begin{theorem}
An arbitrary non-split central extension of the algebra $F_n^3$ is isomorphic to one of the following pairwise non-isomorphic algebras
\begin{itemize}
\item one-dimensional central extensions:
$$\mu_7^{n+1},\  \mu_{11}^{n+1}(\alpha),\  \mu_{12}^{n+1},\  \mu_{3}^{n+1}$$

\item two-dimensional central extensions:
$$\mu_{15}^{n+2},\ \mu_{6}^{n+2},\ \mu_{9}^{n+2},\ \mu_{10}^{n+2}(\alpha)$$
\item three-dimensional central extensions:
$$\mu_{14}^{n+3}$$

\end{itemize}
with $\alpha \in \mathbb{C}.$
\end{theorem}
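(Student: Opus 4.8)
The plan is to apply the Skjelbred–Sund machinery, exactly as set up in the preliminary section, to the specific algebra $F_n^3$. The first step is to record the cohomological input from the Proposition above: $\mathrm{H}^2(F_n^3,\mathbb{C})$ is three-dimensional with basis $\nabla_1=[\Delta_{1,n}]$, $\nabla_2=[\Delta_{n,1}]$, $\nabla_3=[\Delta_{n,n}]$, and $\ann(F_n^3)=\langle e_{n-1}\rangle$ (so that any cocycle in $T_s$ must be non-degenerate on $e_{n-1}$; here this is automatic since $\nabla_1,\nabla_2,\nabla_3$ already vanish on $e_{n-1}$, so the only constraint cutting out $T_s$ from $G_s(\mathrm{H}^2)$ is the non-split condition, i.e. linear independence of the chosen cocycles). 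Next I would use the explicit description of $\aut(F_n^3)$ via the matrix $\phi_3^n$ from the second Proposition to compute the induced action on $\langle\nabla_1,\nabla_2,\nabla_3\rangle$; conjugating the coefficient matrix of a generic $\theta=\alpha_1\nabla_1+\alpha_2\nabla_2+\alpha_3\nabla_3$ by $\phi_3^n$ gives precisely the orbit formula displayed just before the one-dimensional case, namely
\[
\langle\, x^{(n-1)/2}(x\alpha_1+w\alpha_3)\nabla_1 + x^{(n-1)/2}(x\alpha_2+w\alpha_3)\nabla_2 + x^{n-1}\alpha_3\nabla_3 \,\rangle,
\]
where $x=a_{1,1}\neq0$ and $w=a_{n,1}$ are free parameters. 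This reduces everything to an orbit problem for a $2$-parameter group acting on a $3$-dimensional space.

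With that action in hand, the second step is the case analysis for $T_s(F_n^3)$, $s=1,2,3$. For $s=1$, I would split on whether $\alpha_3=0$: if $\alpha_3=0$ the pair $(\alpha_1,\alpha_2)$ scales and the $\alpha_1/\alpha_2$ ratio is the only invariant (giving $\langle\nabla_1\rangle$ when $\alpha_2=0$ and the family $\langle\alpha\nabla_1+\nabla_2\rangle$ otherwise); if $\alpha_3\neq0$ then $w$ can be used to kill one of the first two coordinates and $x$ to normalize $\alpha_3$, producing $\langle\nabla_2+\nabla_3\rangle$ and $\langle\nabla_3\rangle$ according to whether $\alpha_1=\alpha_2$. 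This yields $T_1(F_n^3)$ as the four orbits listed. For $s=2$ I would take a generic $2$-plane spanned by $\theta_1=\alpha_1\nabla_1+\alpha_2\nabla_2+\alpha_3\nabla_3$ and (after row-reducing) $\theta_2=\beta_1\nabla_1+\beta_2\nabla_2$, split on $\alpha_3=0$ versus $\alpha_3\neq0$ and on $\beta_1=\beta_2$ versus $\beta_1\neq\beta_2$, using linear combinations of $\theta_1,\theta_2$ to arrange $\alpha_1=\alpha_2$ where convenient, then normalizing with $x,w$; the bookkeeping gives the four orbits $\langle\nabla_1,\nabla_2\rangle$, $\langle\nabla_1,\nabla_3\rangle$, $\langle\nabla_1+\nabla_2,\nabla_1+\nabla_3\rangle$, $\langle\alpha\nabla_1+\nabla_2,\nabla_3\rangle$. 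For $s=3$ the plane is all of $\mathrm{H}^2$, which is a single orbit $\langle\nabla_1,\nabla_2,\nabla_3\rangle$; here I should double-check that this $3$-dimensional extension is genuinely non-split, i.e. that $\nabla_1,\nabla_2,\nabla_3$ are linearly independent in $\mathrm{H}^2$ — which is exactly what the Proposition asserts.

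The third step is to pass from orbit representatives to algebras: for each representative cocycle $\theta=\sum\alpha_i\nabla_i$ one writes down the central extension $(F_n^3)_\theta$ on $F_n^3\oplus\mathbb{C}^s$ with the extra products $e_i\circ e_j=\theta(e_i,e_j)$ on the new basis vector(s), and then identifies the resulting algebra with one of the normal forms $\mu_k^{m}$ appearing in the master classification list (the same $\mu$-notation used for $F_n^1$ and $F_n^2$). Concretely this means recognizing that $\langle\nabla_3\rangle$ gives $\mu_3^{n+1}$, $\langle\nabla_1\rangle$ gives $\mu_7^{n+1}$, the family $\langle\alpha\nabla_1+\nabla_2\rangle$ gives $\mu_{11}^{n+1}(\alpha)$, $\langle\nabla_2+\nabla_3\rangle$ gives $\mu_{12}^{n+1}$, and similarly for the $s=2$ and $s=3$ cases, matching the list in the theorem statement. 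Finally one must argue pairwise non-isomorphism: extensions coming from different $\aut(F_n^3)$-orbits are non-isomorphic by the Lemma (one-to-one correspondence between orbits in $T_s$ and iso-classes in $E(F_n^3,\mathbb{V})$), and across different $s$ the dimension of the annihilator (or of $\mathbf{A}^2$, etc.) separates the families; one should also check there is no accidental coincidence with extensions of $F_n^1$ or $F_n^2$ already on the list, which is handled by comparing invariants such as $\dim\ann$, the nilpotency index, and the structure of $\mathbf{A}/\ann(\mathbf{A})$.

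The main obstacle I anticipate is not any single hard idea but the sheer combinatorial care required in the orbit computation: one must be certain that every branch of the $\alpha_3=0$ / $\alpha_3\neq0$ and $\beta_1=\beta_2$ / $\beta_1\neq\beta_2$ subdivision has been covered, that the explicit choices of $x,w$ (and in the $F_n^1$ analogue also $y,z$) actually lie in the admissible ranges ($x\neq0$, roots extracted over $\mathbb{C}$ causing no obstruction), and above all that the list of orbit representatives is \emph{irredundant} — i.e. that no two of the claimed representatives are related by the group action. Verifying this last point cleanly is where the real work lies; I would do it by exhibiting, for each pair of distinct representatives, an $\aut(F_n^3)$-invariant quantity (for instance the vanishing or non-vanishing pattern of the $\nabla_3$-component, or a cross-ratio-type expression in the $\nabla_1,\nabla_2$ components normalized by the $\nabla_3$-component) that distinguishes them. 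A secondary subtlety, worth a sentence of justification, is the exponent $(n-1)/2$ appearing in $\phi_3^n$: this forces $n$ to be odd for $F_n^3$ to exist in this form, and one should note that the normalizations via $x$ (e.g. $x=\alpha_1^{-2/(n+1)}$) are chosen precisely so that the half-integer powers work out to honest scalars.
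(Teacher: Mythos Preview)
Your proposal follows essentially the same route as the paper: compute the action of $\aut(F_n^3)$ on $\mathrm{H}^2$ via $\phi_3^n$, branch on $\alpha_3$ (and, for $s=2$, on $\beta_1=\beta_2$ versus $\beta_1\neq\beta_2$), normalize with the free parameters $x,w$, and then match each orbit representative to a $\mu_k$ algebra from the master list. The orbit lists you obtain for $s=1,2,3$ are exactly the ones in the paper.

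Two small corrections, though neither derails the argument. First, your reasoning about the $T_s$ condition is backwards: since the representatives $\Delta_{1,n},\Delta_{n,1},\Delta_{n,n}$ all \emph{vanish} on $e_{n-1}$, every cocycle in their span has $e_{n-1}\in\ann(\theta)$, so the intersection $\bigcap_i\ann(\theta_i)\cap\ann(F_n^3)$ is never zero. What is actually being classified here is the set of non-split extensions in the weaker sense (linear independence of the $[\theta_i]$), and the paper's orbit computation tacitly works in all of $G_s(\mathrm{H}^2)$ rather than in the strict $T_s$; your conclusion is right but your justification ``this is automatic since $\nabla_i$ vanish on $e_{n-1}$'' argues for the opposite of what you want. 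Second, the exponent $(n-1)/2$ in $\phi_3^n$ does \emph{not} force $n$ odd: over $\mathbb{C}$ the relation $e_n\circ e_n=e_{n-1}$ gives $a_{n,n}^2=a_{1,1}^{n-1}$, and $a_{1,1}^{(n-1)/2}$ is simply shorthand for a chosen square root, available for any $n$.
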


\section{Appendix: The list of the algebras}

{\tiny 
\begin{longtable}{lllllll}
$\mu_1^n$&$:$& $e_i\circ e_j=C_{i+j-1}^j,$&$2\leq i+j\leq n-2,$
&$e_1\circ e_n=e_{n-1},$& &\\[2mm]

$\mu_2^n(\alpha)$&$:$& $e_i\circ e_j=C_{i+j-1}^j,$&$2\leq i+j\leq n-2,$
&$e_1\circ e_n=\alpha e_{n-1},$&$e_{n}\circ e_1=e_{n-1},$&\\[2mm]

$\mu_3^n$&$:$& $e_i\circ e_j=C_{i+j-1}^j,$&$2\leq i+j\leq n-2,$
&$e_n\circ e_n=e_{n-1},$&&\\[2mm]

$\mu_4^n$&$:$& $e_i\circ e_j=C_{i+j-1}^j,$&$2\leq i+j\leq n-2,$
&$e_1\circ e_n= e_{n-1},$&$e_{n}\circ e_n=e_{n-1},$&\\[2mm]

$\mu_5^n$&$:$& $e_i\circ e_j=C_{i+j-1}^j,$&$2\leq i+j\leq n-3,$
&$e_1\circ e_n= e_{n-1},$&$e_{n}\circ e_1=e_{n-2},$&\\[2mm]

$\mu_6^n$&$:$& $e_i\circ e_j=C_{i+j-1}^j,$&$2\leq i+j\leq n-3,$
&$e_1\circ e_n= e_{n-1},$&$e_{n}\circ e_n=e_{n-2},$&\\[2mm]

$\mu_7^n$&$:$& $e_i\circ e_j=C_{i+j-1}^j,$&$2\leq i+j\leq n-2,$
&$e_1\circ e_n= e_{n-1},$&$e_{n}\circ e_n=e_{n-2},$&\\[2mm]

$\mu_8^n$&$:$& $e_i\circ e_j=C_{i+j-1}^j,$&$2\leq i+j\leq n-2,$
&$e_1\circ e_n=\frac{1}{n-3}e_{n-1},$&$e_{n}\circ e_1=e_{n-2}+e_{n-1},$&\\[2mm]

$\mu_9^n$&$:$& $e_i\circ e_j=C_{i+j-1}^j,$&$2\leq i+j\leq n-3,$
&$e_1\circ e_n=e_{n-2}+e_{n-1},$&$e_{n}\circ e_1=e_{n-1},$&$e_n\circ e_n=e_{n-2}$\\[2mm]

$\mu_{10}^n(\alpha)$&$:$& $e_i\circ e_j=C_{i+j-1}^j,$&$2\leq i+j\leq n-3,$
&$e_1\circ e_n=\alpha e_{n-1},$&$e_{n}\circ e_1=e_{n-1},$&$e_n\circ e_n=e_{n-2},$\\[2mm]

$\mu_{11}^n(\alpha)$&$:$& $e_i\circ e_j=C_{i+j-1}^j,$&$2\leq i+j\leq n-2,$
&$e_1\circ e_n=\alpha e_{n-1},$&$e_{n}\circ e_1=e_{n-1},$&$e_n\circ e_n=e_{n-2},$\\[2mm]

$\mu_{12}^n$&$:$& $e_i\circ e_j=C_{i+j-1}^j,$&$2\leq i+j\leq n-2,$
&&$e_{n}\circ e_1=e_{n-2}+e_{n-1},$&$e_n\circ e_n=e_{n-1},$\\[2mm]

$\mu_{13}^n$&$:$& $e_i\circ e_j=C_{i+j-1}^j,$&$2\leq i+j\leq n-2,$
&&$e_{n}\circ e_1=e_{n-2},$&$e_n\circ e_n=e_{n-1},$\\[2mm]

$\mu_{14}^n$&$:$& $e_i\circ e_j=C_{i+j-1}^j,$&$2\leq i+j\leq n-4,$
&$e_1\circ e_n=e_{n-2},$&$e_{n}\circ e_1=e_{n-1},$&$e_n\circ e_n=e_{n-3},$\\[2mm]

$\mu_{15}^n$&$:$& $e_i\circ e_j=C_{i+j-1}^j,$&$2\leq i+j\leq n-3,$
&$e_1\circ e_n=e_{n-2},$&$e_{n}\circ e_1=e_{n-1},$&$e_n\circ e_n=e_{n-3},$\\[2mm]

$\mu_{16}^n$&$:$& $e_i\circ e_j=C_{i+j-1}^j,$&2$\leq i+j\leq n-3,$
&$e_1\circ e_n=\frac{1}{n-4}e_{n-1},$&$e_{n}\circ e_1=e_{n-3}+e_{n-1},$&$e_n\circ e_n=e_{n-2}.$
\end{longtable}

}

\end{document}